\newtheorem{theorem}{Theorem}[section]
\newtheorem{lemma}[theorem]{Lemma}
\newtheorem{corollary}[theorem]{Corollary}
\theoremstyle{definition}
\newtheorem{definition}[theorem]{Definition}
\newtheorem{example}[theorem]{Example}
\theoremstyle{remark}
\numberwithin{equation}{section}
\begin{document}

\title [{On graded $I_{e}$-prime submodules}]{On graded $I_{e}$-prime submodules of graded modules over graded commutative rings }

 \author[{{S. Alghueiri and K. Al-Zoubi }}]{\textit{Shatha Alghueiri and Khaldoun Al-Zoubi*  }}

\address
{\textit{Shatha Alghueiri, Department of Mathematics and Statistics,
Jordan University of Science and Technology, P.O.Box 3030, Irbid
22110, Jordan.}}
\bigskip
{\email{\textit{ghweiri64@gmail.com}}}
\address
{\textit{Khaldoun Al-Zoubi, Department of Mathematics and
Statistics, Jordan University of Science and Technology, P.O.Box
3030, Irbid 22110, Jordan.}}
\bigskip
{\email{\textit{kfzoubi@just.edu.jo}}}

 \subjclass[2010]{13A02, 16W50.}

\date{}
\begin{abstract}
Let $G$ be a group with identity $e$. Let $R$ be a $G$-graded commutative
ring with identity and $M$ a graded $R$-module. In this paper, we introduce
the concept of graded $I_{e}$-prime submodule as a generalization of a
graded prime submodule for $I=\oplus _{g\in G}I_{g}$ a fixed graded ideal of
$R$. We give a number of results concerning of these classes of graded
submodules and their homogeneous components. A proper graded submodule $N$
of $M$ is said to be a graded $I_{e}$-prime submodule of $M$ if whenever $%
r_{g}\in h(R)$ and $m_{h}\in h(M)$ with $r_{g}m_{h}\in N-I_{e}N,$ then
either $r_{g}\in (N:_{R}M)$ or $m_{h}\in N.$
\end{abstract}

\keywords{graded $I_{e}$-prime submodules, graded prime submodules, graded $I_{e}$-prime ideals. \\
$*$ Corresponding author}
 \maketitle
\section{Introduction and Preliminaries}

Throughout this paper all rings are commutative with identity and
all modules are unitary.

Graded prime submodules of graded modules over graded commutative rings,
have been introduced and studied by many authors, (see for example [1, 3-4, 6-7, 10, 16]).
Then, many generalizations of graded prime submodules were studied such as
graded primary, graded classical prime, graded weakly prime and graded 2-absorbing submodules (see for
example \cite{2, 5, 9, 16}).
Akray and Hussein in [8] introduced the concept of $I$-prime submodule over
a commutative ring as a new generalization of prime submodule.

The scope of this paper is devoted
to the theory of graded modules over graded commutative rings. Here, we introduce the concept of graded $I_{e}$-prime submodule as a new
generalization of a graded prime submodule. A number of results concerning
of these classes of graded submodules and their homogeneous components are
given. For example, we give a characterization of graded $I_{e}$-prime
submodule (see Theorem 2.15). We also study the behaviour of graded $I_{e}$%
-prime submodule under localization (see Theorem 2.16).  

First, we recall some basic properties of graded rings and modules which
will be used in the sequel. We refer to [12-15] for these basic
properties and more information on graded rings and modules.

Let $G$ be a multiplicative group with identity element $e.$ A ring $R$ is
called a graded ring (or $G$-graded ring) if there exist additive subgroups $%
R_{g}$ of $R$ indexed by the elements $g\in G$ such that $R=\oplus _{g\in
G}R_{g}$ and $R_{g}R_{h}\subseteq R_{gh}$ for all $g,h\in G$. The elements
of $R_{g}$ are called homogeneous of degree $g$ and all the homogeneous
elements are denoted by $h(R)$, i.e. $h(R)=\cup _{g\in G}R_{g}$. If $r\in R$%
, then $r$ can be written uniquely as $\sum_{g\in G}r_{g}$, where $r_{g}$ is
called a homogeneous component of $r$ in $R_{g}$. Moreover, $R_{e}$ is a
subring of $R$ and $1\in R_{e}$. Let $R=\oplus _{g\in G}R_{g}$ be a $G$%
-graded ring. An ideal $I$ of $R$ is said to be a graded ideal if $%
I=\sum_{g\in G}(I\cap R_{g}):=\sum_{g\in G}I_{g}$, (see \cite{15}. Let $R=\oplus _{g\in G}R_{g}$ be a $G$-graded ring. A left $R$-module $M$ is said to be a graded $R$-module (or $G$-graded $R$%
-module) if there exists a family of additive subgroups $\{M_{g}\}_{g\in G}$
of $M$ such that $M=\oplus _{g\in G}M_{g}$ and $R_{g}M_{h}\subseteq M_{gh}$
for all $g,h\in G$. Also if an element of $M$ belongs to $\cup _{g\in
G}M_{g}=h(M)$, then it is called a homogeneous. Note that $M_{g}$ is an $%
R_{e}$-module for every $g\in G$. Let $R=\oplus _{g\in G}R_{g}$ be a $G$%
-graded ring. A submodule $N$ of $M$ is said to be a graded submodule of $M$
if $N=\oplus _{g\in G}(N\cap M_{g}):=\oplus _{g\in G}N_{g}$. In this case, $%
N_{g}$ is called the $g$-component of $N$. Moreover, $M/N$ becomes a $G$%
-graded $R$-module with $g$-component $(M/N)_{g}:=(M_{g}+N)/N$ for $g\in G$,
(see \cite{15}).

Let $R$ be a $G$-graded ring and $S\subseteq h(R)$ be a multiplicatively
closed subset of $R$. Then the ring of fraction $S^{-1}R$ is a graded ring
which is called the graded ring of fractions. Indeed, $S^{-1}R=\oplus _{g\in
G}(S^{-1}R)_{g}$ where $(S^{-1}R)_{g}=\{r/s:r\in R,s\in S$ and $%
g=(degs)^{-1}(degr)\}$. Let $M$ be a graded module over a $G$-graded ring $R$
and $S\subseteq h(R)$ be a multiplicatively closed subset of $R$. The module
of fractions $S^{-1}M$ over a graded ring $S^{-1}R$ is a graded module which
is called the module of fractions, if $S^{-1}M=\oplus _{g\in G}(S^{-1}M)_{g}$
where $(S^{-1}M)_{g}=\{m/s:m\in M,s\in S$ and $g=(degs)^{-1}(degm)\}$. We
write $h(S^{-1}R)=\cup _{g\in G}(S^{-1}R)_{g}$ and $h(S^{-1}M)=\cup _{g\in
G}(S^{-1}M)_{g}$. For any graded submodule $N$ of $M$, the graded submodule
of $S^{-1}N$ of $S^{-1}M$ is defined by $S^{-1}N=\{\alpha \in S^{-1}M:\alpha
=m/s$ for $m\in N$ and $s\in S\}$ and $S^{-1}N\not=S^{-1}M$ if and only if $%
S\cap (N:_{R}M)=\emptyset ,$ (see \cite{15}).

The graded radical of a graded ideal $I$, denoted by $Gr(I)$, is the set of
all $r=\sum_{g\in G}r_{g}\in R$ such that for each $g\in G$ there exists $%
n_{g}\in
\mathbb{N}
$ with $r_{g}^{n_{g}}\in I$. Note that, if $x$ is a homogeneous element,
then $x\in Gr(I)$ if and only if $x^{n}\in I$ for some $n\in \mathbb{N} $, (see \cite{17}).

Let $R$ be a $G$-graded ring and $M$ be a graded $R$-module. It is shown in \cite[Lemma 2.1]{10} that if $N$ is a graded submodule of $M$, then $%
(N:_{R}M)=\{r\in R:rN\subseteq M\}$ is a graded ideal of $R$. Also, for any $%
r_{g}\in h(R)$, the graded submodule $\{m\in M:r_{g}m\in N\}$ will be
denoted by $(N:_{M}r_{g})$. The \textit{graded radical} of a graded
submodule $N$ of $M$, denoted by $Gr_{M}(N)$, is defined to be the
intersection of all graded prime submodules of $M$ containing $N$. If $N$ is
not contained in any graded prime submodule of $M$, then $Gr_{M}(N)=M$, (see \cite{16}).



 \section{RESULTS}

\begin{definition}
Let $R$ be a $G$-graded ring, $M$ a graded $R$-module, $I=\oplus _{g\in G}I_{g}$ a graded ideal of $R$, $N=\oplus _{g\in
G}N_{g}$ a graded submodule of $M$ and $g\in G.$
 \begin{enumerate}[\upshape (i)]
   \item  We say that $N_{g}$ is a $g$-$I_{e}$-prime submodule of the $R_{e}$%
-module $M_{g}$ if $N_{g}\not=M_{g}$; and whenever $r_{e}\in R_{e}$ and $%
m_{g}\in M_{g}$ with $r_{e}m_{g}\in N_{g}-I_{e}N_{g}$, implies either $%
m_{g}\in N_{g}$ or $r_{e}\in (N_{g}:_{R_{e}}M_{g}).$
   \item  We say that $N$ is a graded $I_{e}$-prime submodule of $M$ if $%
N\not=M $; and whenever $r_{h}\in h(R)$ and $m_{\lambda }\in h(M)$ with $%
r_{h}m_{\lambda }\in N-I_{e}N$, implies either $m_{\lambda }\in N$ or $%
r_{h}\in (N:_{R}M).$
 \end{enumerate}
\end{definition}
\begin{definition}
Let $R$ be a $G$-graded ring, $I=\oplus _{g\in
G}I_{g}$ and $J=\oplus _{g\in G}J_{g}$ be graded ideals of $R.$ Then $J_{e}$
is said to be an $e$-$I_{e}$-prime ideal of $R_{e},$ if $J_{e}\not=R_{e};$
and whenever $r_{e}s_{e}\in J_{e}-I_{e}J_{e},$ where $r_{e},s_{e}\in R_{e}$,
implies either $r_{e}\in J_{e}$ or $s_{e}\in J_{e}.$
\end{definition}
Let $R$ be a $G$-graded ring, $M$ a graded $R$-module and $I=\oplus _{g\in
G}I_{g}$ a graded ideal of $R.$
Recall from \cite{10} that a proper graded submodule $N=\oplus _{g\in
G}N_{g}$ of a graded $R$-module $M$ is said to be \textit{a graded prime submodule of $M$} if whenever $r_{g}m_{h}\in N$
where $r_{g}\in h(R)$ and $m_{h}\in h(M),$ then either $m_{h}\in N$ or $%
r_{g}\in (N:_{R}M).$ It is easy to see that every graded prime submodule of $%
M$ is a graded $I_{e}$-prime submodule. The following example shows that the
converse is not true in general.

\begin{example}
Let $G=%
\mathbb{Z}
_{2}$ and $R=%
\mathbb{Z}
$ be a $G$-graded ring with $R_{0}=%
\mathbb{Z}
$ and $R_{1}=\{0\}.$ Let $M=%
\mathbb{Z}
_{12}$ be a graded $R$-module with $M_{0}=%
\mathbb{Z}
_{12}$ and $M_{1}=\{0\}.$ Now, consider a graded ideal $I=4%
\mathbb{Z}
$ of $R$ and a graded submodule $N=\langle 4\rangle $ of $M,$ then $N$ is
not a graded prime submodule of $M$ since $2\cdot 2=4\in N$ but neither $%
2\in N$ nor $2\in (N:_{R}M)=4%
\mathbb{Z}
.$ However, $N$ is a graded $I_{e}$-prime submodule of $M$ since $%
N-I_{0}N=\langle 4\rangle -4%
\mathbb{Z}
\cdot \langle 4\rangle =\emptyset .$
\end{example}
Recall from \cite{10} that if $N=\oplus _{g\in G}N_{g}$ is a graded submodule of a graded $%
R $-module $M$ and $g\in G$,  then $N_{g}$ is called \textit{a }$g$\textit{-prime
submodule of an }$R_{e}$\textit{-module }$M_{g}$ if $N_{g}\not=M_{g}$; and
whenever $r_{e}m_{g}\in
N_{g} $ where $r_{e}\in R_{e}$ and $m_{g}\in M_{g}$, then either $m_{g}\in N_{g}$ or $r_{e}\in (N_{g}:_{R_{e}}M_{g})$.

\begin{theorem}
Let $R$ be a $G$-graded ring, $M$ a graded $R$%
-module, $I=\oplus _{g\in G}I_{g}$ a graded ideal of $R$, $N=\oplus _{g\in
G}N_{g}$ a graded submodule of $M$ and $g\in G.$ If $N_{g}$ is a $g$-$I_{e}$%
-prime submodule of $M_{g}$, then either $N_{g}$ is a $g$-prime submodule of
$M_{g}$ or $(N_{g}:_{R_{e}}M_{g})N_{g}\subseteq I_{e}N_{g}.$
\end{theorem}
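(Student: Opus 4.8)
The plan is to argue by contradiction, assuming that $N_{g}$ fails to be a $g$-prime submodule of $M_{g}$, and then to deduce the containment $(N_{g}:_{R_{e}}M_{g})N_{g}\subseteq I_{e}N_{g}$ directly. First I would record the consequence of non-primeness: if $N_{g}$ is not $g$-prime, there exist $r_{e}\in R_{e}$ and $m_{g}\in M_{g}$ with $r_{e}m_{g}\in N_{g}$ yet $m_{g}\notin N_{g}$ and $r_{e}\notin(N_{g}:_{R_{e}}M_{g})$. Since $N_{g}$ is $g$-$I_{e}$-prime and both alternatives in the definition fail, the product $r_{e}m_{g}$ cannot lie in $N_{g}-I_{e}N_{g}$; as it already lies in $N_{g}$, this forces $r_{e}m_{g}\in I_{e}N_{g}$. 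This witness $(r_{e},m_{g})$ is the engine of the whole argument.

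Next I would suppose, toward a contradiction, that the desired containment fails, so there exist $a_{e}\in(N_{g}:_{R_{e}}M_{g})$ and $n_{g}\in N_{g}$ with $a_{e}n_{g}\notin I_{e}N_{g}$. The core of the proof is then a three-case analysis designed to build, out of $r_{e},a_{e},m_{g},n_{g}$, a single product lying in $N_{g}$ but not in $I_{e}N_{g}$, whose degree-$e$ factor is outside $(N_{g}:_{R_{e}}M_{g})$ and whose module factor is outside $N_{g}$, directly contradicting the $g$-$I_{e}$-prime hypothesis. Because $r_{e},a_{e}\in R_{e}$ and $m_{g},n_{g}\in M_{g}$, every sum and product I form stays homogeneous of the appropriate degree and lands in $M_{g}$, so the definition applies verbatim.

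Concretely, the three cases are: (1) $r_{e}n_{g}\notin I_{e}N_{g}$, where I would test $r_{e}(m_{g}+n_{g})=r_{e}m_{g}+r_{e}n_{g}$; (2) $a_{e}m_{g}\notin I_{e}N_{g}$, where I would test $(r_{e}+a_{e})m_{g}=r_{e}m_{g}+a_{e}m_{g}$; and (3) $r_{e}n_{g}\in I_{e}N_{g}$ and $a_{e}m_{g}\in I_{e}N_{g}$, where I would test the full product $(r_{e}+a_{e})(m_{g}+n_{g})$. In each case all summands visibly lie in $N_{g}$ (using $r_{e}m_{g}\in I_{e}N_{g}\subseteq N_{g}$, the fact that $a_{e}\in(N_{g}:_{R_{e}}M_{g})$, and $n_{g}\in N_{g}$), while exactly one summand sits outside $I_{e}N_{g}$, so the chosen product lies in $N_{g}-I_{e}N_{g}$. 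Applying the $g$-$I_{e}$-prime property and then invoking $m_{g}\notin N_{g}$, $n_{g}\in N_{g}$, $r_{e}\notin(N_{g}:_{R_{e}}M_{g})$, and $a_{e}\in(N_{g}:_{R_{e}}M_{g})$ collapses each alternative to a contradiction.

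The step I expect to be the main obstacle is the bookkeeping in case (3): one must verify that among the four terms $r_{e}m_{g}$, $r_{e}n_{g}$, $a_{e}m_{g}$, $a_{e}n_{g}$ the first three all lie in $I_{e}N_{g}$ while the fourth does not, so that the total sum lies in $N_{g}-I_{e}N_{g}$, and then confirm that both conclusions $m_{g}+n_{g}\in N_{g}$ and $r_{e}+a_{e}\in(N_{g}:_{R_{e}}M_{g})$ are refuted by the witness data after cancelling $n_{g}$ and $a_{e}$ respectively. Once the three cases are exhausted, the assumption $a_{e}n_{g}\notin I_{e}N_{g}$ is untenable, which yields $(N_{g}:_{R_{e}}M_{g})N_{g}\subseteq I_{e}N_{g}$ and hence the stated dichotomy.
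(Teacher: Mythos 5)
Your proposal is correct and follows essentially the same route as the paper's proof: the same three-case perturbation argument testing $r_{e}(m_{g}+n_{g})$, $(r_{e}+a_{e})m_{g}$, and $(r_{e}+a_{e})(m_{g}+n_{g})$, differing only in that you phrase it as a contradiction from a fixed non-primeness witness while the paper argues directly with an arbitrary pair $r_{e}m_{g}\in N_{g}$ and chooses fresh elements in each subcase. The case analysis is exhaustive and each step checks out.
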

\begin{proof}
Suppose that $N_{g}$ is a $g$-$I_{e}$-prime submodule of $M_{g}$
such that $(N_{g}:_{R_{e}}M_{g})N_{g}\not\subseteq I_{e}N_{g}.$ Now, let $%
r_{e}\in R_{e}$ and $m_{g}\in M_{g}$ with $r_{e}m_{g}\in N_{g}.$ If $%
r_{e}m_{g}\not\in I_{e}N_{g}$, then either $m_{g}\in N_{g}$ or $r_{e}\in
(N_{g}:_{R_{e}}M_{g})$ as $N_{g}$ is a $g$-$I_{e}$-prime submodule of $%
M_{g}. $ Assume that $r_{e}m_{g}\in I_{e}N_{g}$. If $r_{e}N_{g}\not\subseteq
I_{e}N_{g},$ then there exists $x_{g}\in N_{g}$ such that $%
r_{e}x_{g}\not\in I_{e}N_{g}$, so we get $r_{e}(m_{g}+x_{g})\in
N_{g}-I_{e}N_{g}$ and then either $m_{g}+x_{g}\in N_{g}$ or $r_{e}\in
(N_{g}:_{R_{e}}M_{g})$ as $N_{g}$ is a $g$-$I_{e}$-prime submodule of $%
M_{g}. $ Hence, either $m_{g}\in N_{g}$ or $r_{e}\in (N_{g}:_{R_{e}}M_{g}).$
If $(N_{g}:_{R_{e}}M_{g})m_{g}\not\subseteq I_{e}N_{g}$, there exists $%
t_{e}\in (N_{g}:_{R_{e}}M_{g})$ such that $t_{e}m_{g}\not\in I_{e}N_{g}$, so
we get $(r_{e}+t_{e})m_{g}\in N_{g}-I_{e}N_{g}$ and then either $m_{g}\in
N_{g}$ or $r_{e}+t_{e}\in (N_{g}:_{R_{e}}M_{g})$ as $N_{g}$ is a $g$-$I_{e}$%
-prime submodule of $M_{g}.$ Hence, either $m_{g}\in N_{g}$ or $r_{e}\in
(N_{g}:_{R_{e}}M_{g}).$ Now, we can assume that $r_{e}N_{g}\subseteq
I_{e}N_{g}$ and $(N_{g}:_{R_{e}}M_{g})m_{g}\subseteq I_{e}N_{g}.$ But $%
(N_{g}:_{R_{e}}M_{g})N_{g}\not\subseteq I_{e}N_{g}$, so there exist $%
s_{e}\in (N_{g}:_{R_{e}}M_{g})$ and $l_{g}\in N_{g}$ such that $%
s_{e}l_{g}\not\in I_{e}N_{g}.$ Thus $(r_{e}+s_{e})(m_{g}+l_{g})\in
N_{g}-I_{e}N_{g}$ gives either $m_{g}+l_{g}\in N_{g}$ or $r_{e}+s_{e}\in
(N_{g}:_{R_{e}}M_{g})$ as $N_{g}$ is a $g$-$I_{e}$-prime submodule of $%
M_{g}. $ Hence, either $m_{g}\in N_{g}$ or $r_{e}\in (N_{g}:_{R_{e}}M_{g}).$
Therefore, $N_{g}$ is a $g$-prime submodule of $M_{g}$.
\end{proof}
\begin{corollary}
Let $R$ be a $G$-graded ring, $M$ a graded $R$%
-module, $N=\oplus _{g\in G}N_{g}$ a graded submodule of $M$ and $g\in G.$
If $N_{g}$ is a $g$-$0$-prime submodule of $M_{g}$ such that $%
(N_{g}:_{R_{e}}M_{g})N_{g}\not=0$, then $N_{g}$ is a $g$-prime submodule of $%
M_{g}.$
\end{corollary}
\begin{proof}
Take $I=0$ in the Theorem 2.4.
\end{proof}

Recall from \cite{11} that a graded $R$-module $M$ is called a graded multiplication if for
each graded submodule $N$ of $M$, we have $N=IM$ for some graded ideal $I$
of $R$. If $N$ is graded submodule of a graded multiplication module $M$,
then $N=(N:_{R}M)M$.
Let $N$ and $K$ be two graded submodules of a graded multiplication $R$%
-module $M$ with $N=I_{1}M$ and $K=I_{2}M$ for some graded ideals $I_{1}$
and $I_{2}$ of $R$. The product of $N$ and $K$ denoted by $NK$ is defined by
$NK=I_{1}I_{2}M.$

 \begin{corollary}
Let $R$ be a $G$-graded ring, $M$ a graded $R$%
-module, $I=\oplus _{g\in G}I_{g}$ a graded ideal of $R$, $N=\oplus _{g\in
G}N_{g}$ a graded submodule of $M$ and $g\in G.$ If $M_{g}$ is a
multiplication $R_{e}$-module and $N_{g}$ is a $g$-$I_{e}$-prime submodule
which is not a $g$-prime submodule of $M_{g}$, then $N_{g}^{2}\subseteq
I_{e}N_{g}.$
\end{corollary}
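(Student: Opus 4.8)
The plan is to reduce the statement to Theorem 2.4 and then translate its conclusion into a statement about $N_{g}^{2}$ using the product formula for multiplication modules. The hypotheses hand me exactly the alternative in Theorem 2.4 that I want: since $N_{g}$ is a $g$-$I_{e}$-prime submodule of $M_{g}$ that is \emph{not} a $g$-prime submodule, the first alternative of that theorem fails, and so I am forced into the second one, namely $(N_{g}:_{R_{e}}M_{g})N_{g}\subseteq I_{e}N_{g}$. This is the only place the $g$-$I_{e}$-prime hypothesis is used, and it is already packaged for me, so this step is immediate.

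The remaining work is to identify $N_{g}^{2}$ with $(N_{g}:_{R_{e}}M_{g})N_{g}$. Here I invoke that $M_{g}$ is a multiplication $R_{e}$-module: every submodule $N_{g}$ then satisfies $N_{g}=(N_{g}:_{R_{e}}M_{g})M_{g}$. Writing $J=(N_{g}:_{R_{e}}M_{g})$ for brevity, so that $N_{g}=JM_{g}$, I apply the product of submodules of a multiplication module (the $R_{e}$-analogue of the product recalled before this corollary): since $N_{g}=JM_{g}$, one gets $N_{g}^{2}=(JM_{g})(JM_{g})=J^{2}M_{g}=J(JM_{g})=JN_{g}=(N_{g}:_{R_{e}}M_{g})N_{g}$. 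Combining this identity with the inclusion from the first step yields $N_{g}^{2}=(N_{g}:_{R_{e}}M_{g})N_{g}\subseteq I_{e}N_{g}$, which is exactly the desired conclusion.

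The only point requiring care—and the closest thing to an obstacle—is the bookkeeping in the middle computation, where I must make sure the product $N_{g}^{2}$, defined through the ideal $J$ presenting $N_{g}$, genuinely collapses to $(N_{g}:_{R_{e}}M_{g})N_{g}$; this rests on the well-definedness of the product and on the equality $N_{g}=JM_{g}$ supplied by the multiplication hypothesis. Once that identification is in place, the corollary is a direct consequence of Theorem 2.4, so I expect no further difficulty.
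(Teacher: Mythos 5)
Your proposal is correct and follows essentially the same route as the paper: invoke Theorem 2.4 to get $(N_{g}:_{R_{e}}M_{g})N_{g}\subseteq I_{e}N_{g}$, then use the multiplication-module identity $N_{g}=(N_{g}:_{R_{e}}M_{g})M_{g}$ to compute $N_{g}^{2}=(N_{g}:_{R_{e}}M_{g})^{2}M_{g}=(N_{g}:_{R_{e}}M_{g})N_{g}\subseteq I_{e}N_{g}$. The only cosmetic difference is that you record the middle step as an equality where the paper writes an inclusion; both are fine.
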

\begin{proof}
Suppose that $N_{g}$ is a $g$-$I_{e}$-prime submodule of $M_{g}$
which is not a $g$-prime, so by Theorem 2.4, we get $%
(N_{g}:_{R_{e}}M_{g})N_{g}\subseteq I_{e}N_{g}.$ Now, as $M_{g}$ is a
multiplication $R_{e}$-module we get $%
N_{g}^{2}=(N_{g}:_{R_{e}}M_{g})^{2}M_{g}\subseteq
(N_{g}:_{R_{e}}M_{g})N_{g}\subseteq I_{e}N_{g}.$ Therefore, $%
N_{g}^{2}\subseteq I_{e}N_{g}.$
\end{proof}


\begin{theorem}
Let $R$ be a $G$-graded ring, $M$ a graded $R$%
-module, $I=\oplus _{g\in G}I_{g}$ a graded ideal of $R$, $N=\oplus _{g\in
G}N_{g}$ a graded submodule of $M$ and $g\in G.$ Then the following
statements are equivalent:
\begin{enumerate}[\upshape (i)]
  \item $N_{g}$ is a $g$-$I_{e}$-prime submodule of $M_{g}$.
  \item For $r_{e}\in R_{e}\backslash (N_{g}:_{R_{e}}M_{g})$, $%
(N_{g}:_{M_{g}}r_{e})=N_{g}\cup (I_{e}N_{g}:_{M_{g}}r_{e}).$
  \item For $r_{e}\in R_{e}\backslash (N_{g}:_{R_{e}}M_{g})$, either $%
(N_{g}:_{M_{g}}r_{e})=N_{g}$ or $(N_{g}:_{M_{g}}r_{e})=(I_{e}N_{g}:_{M_{g}}r_{e}).$
\end{enumerate}
\end{theorem}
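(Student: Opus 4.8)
The plan is to prove the cycle (i) $\Rightarrow$ (ii) $\Rightarrow$ (iii) $\Rightarrow$ (i), where throughout $N_{g}$ is understood to be a proper submodule of $M_{g}$ so that statement (i) is meaningful. The workhorse observation, used in every step, is a pair of elementary containments valid for any $r_{e}\in R_{e}$: since $N_{g}$ is an $R_{e}$-submodule of $M_{g}$ we have $N_{g}\subseteq (N_{g}:_{M_{g}}r_{e})$, and since $I_{e}N_{g}\subseteq N_{g}$ we have $(I_{e}N_{g}:_{M_{g}}r_{e})\subseteq (N_{g}:_{M_{g}}r_{e})$. Consequently the inclusion $N_{g}\cup (I_{e}N_{g}:_{M_{g}}r_{e})\subseteq (N_{g}:_{M_{g}}r_{e})$ is automatic, and in (ii) only the reverse inclusion carries content.

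For (i) $\Rightarrow$ (ii), I would fix $r_{e}\in R_{e}\backslash (N_{g}:_{R_{e}}M_{g})$ and take an arbitrary $m_{g}\in (N_{g}:_{M_{g}}r_{e})$, so that $r_{e}m_{g}\in N_{g}$. If $r_{e}m_{g}\in I_{e}N_{g}$ then $m_{g}\in (I_{e}N_{g}:_{M_{g}}r_{e})$; otherwise $r_{e}m_{g}\in N_{g}-I_{e}N_{g}$, and the $g$-$I_{e}$-primeness of $N_{g}$, together with $r_{e}\notin (N_{g}:_{R_{e}}M_{g})$, forces $m_{g}\in N_{g}$. In either case $m_{g}$ lies in the union, giving the reverse inclusion and hence the equality in (ii).

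The step I expect to be the crux is (ii) $\Rightarrow$ (iii), which rests on the standard fact that a module (equivalently, an abelian group) cannot be written as the union of two proper submodules: if $A=B\cup C$ with $B,C$ submodules, then $A=B$ or $A=C$. I would apply this with $A=(N_{g}:_{M_{g}}r_{e})$, $B=N_{g}$ and $C=(I_{e}N_{g}:_{M_{g}}r_{e})$, all of which are $R_{e}$-submodules of $M_{g}$, to obtain exactly the dichotomy of (iii). The one-line justification of the fact is the usual one: if neither equality held, one picks $b\in A\setminus C$ and $c\in A\setminus B$ and tests $b+c\in A=B\cup C$, which then lies in neither $B$ nor $C$, a contradiction.

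Finally, for (iii) $\Rightarrow$ (i), I would take $r_{e}\in R_{e}$ and $m_{g}\in M_{g}$ with $r_{e}m_{g}\in N_{g}-I_{e}N_{g}$ and assume $r_{e}\notin (N_{g}:_{R_{e}}M_{g})$. Since $r_{e}m_{g}\in N_{g}$ we have $m_{g}\in (N_{g}:_{M_{g}}r_{e})$, and (iii) offers two cases. If $(N_{g}:_{M_{g}}r_{e})=N_{g}$, then $m_{g}\in N_{g}$ at once. If instead $(N_{g}:_{M_{g}}r_{e})=(I_{e}N_{g}:_{M_{g}}r_{e})$, then $m_{g}\in (I_{e}N_{g}:_{M_{g}}r_{e})$, i.e. $r_{e}m_{g}\in I_{e}N_{g}$, contradicting $r_{e}m_{g}\notin I_{e}N_{g}$; so this second case is vacuous under our hypothesis. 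Hence $m_{g}\in N_{g}$, which establishes (i) and closes the cycle.
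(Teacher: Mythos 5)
Your proposal is correct and follows essentially the same route as the paper: the same cycle (i)$\Rightarrow$(ii)$\Rightarrow$(iii)$\Rightarrow$(i), the same case split on whether $r_{e}m_{g}$ lies in $I_{e}N_{g}$ for (i)$\Rightarrow$(ii), the same ``a submodule cannot be the union of two proper submodules'' fact for (ii)$\Rightarrow$(iii), and the same two-case argument for (iii)$\Rightarrow$(i). You merely spell out two points the paper leaves implicit (the trivial inclusion $N_{g}\cup (I_{e}N_{g}:_{M_{g}}r_{e})\subseteq (N_{g}:_{M_{g}}r_{e})$ and the proof of the union fact), which is fine.
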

\begin{proof}
$(i)\Rightarrow (ii)$ Suppose that $N_{g}$ is a $g$-$I_{e}$-prime
submodule of $M_{g}$ and $r_{e}\in R_{e}\backslash (N_{g}:_{R_{e}}M_{g})$.
Let $m_{g}\in (N_{g}:_{M_{g}}r_{e}),$ so $r_{e}m_{g}\in N_{g}.$ If $%
r_{e}m_{g}\not\in I_{e}N_{g},$ then $m_{g}\in N_{g}.$ Now, if $r_{e}m_{g}\in
I_{e}N_{g},$ then $m_{g}\in (I_{e}N_{g}:_{M_{g}}r_{e}).$ Hence, $%
(N_{g}:_{M_{g}}r_{e})=N_{g}\cup (I_{e}N_{g}:_{M_{g}}r_{e}).$

$(ii)\Rightarrow (iii)$ If a submodule is a union of two submodules, then it
is equal one of them.

$(iii)\Rightarrow (i)$ Let $r_{e}\in R_{e}$ and $m_{g}\in M_{g}$ such that $%
r_{e}m_{g}\in N_{g}-I_{e}N_{g}$ and $r_{e}\not\in (N_{g}:_{R_{e}}M_{g})$, so
$m_{g}\in (N_{g}:_{M_{g}}r_{e}).$ Now, by $(iii)$ we get either $%
(N_{g}:_{M_{g}}r_{e})=N_{g}$ or $%
(N_{g}:_{M_{g}}r_{e})=(I_{e}N_{g}:_{M_{g}}r_{e}).$ But $r_{e}m_{g}\not\in
I_{e}N_{g}$, so $m_{g}\not\in (I_{e}N_{g}:_{M_{g}}r_{e}).$ Hence, $%
(N_{g}:_{M_{g}}r_{e})=N_{g}$ and then $m_{g}\in N_{g}.$ Therefore, $N_{g}$
is a $g$-$I_{e}$-prime submodule of $M_{g}$.
\end{proof}
Recall from \cite{9} that a proper graded submodule $N=\oplus _{g\in
G}N_{g}$ of a graded $R$-module $M$ is said to be \textit{a graded weakly prime submodule of }$M$ if whenever $0\neq
r_{g}m_{h}\in N$ where $r_{g}\in h(R)$ and $m_{h}\in h(M),$ then either $%
m_{h}\in N$ or $r_{g}\in (N:_{R}M).$
\begin{theorem}
Let $R$ be a $G$-graded ring, $M$ a graded $R$-module, $I=\oplus _{g\in G}I_{g}$ a graded ideal of $R$ and $N$ a proper
graded submodule of $M.$ Then the following statements are equivalent:
\begin{enumerate}[\upshape (i)]
  \item $N$ is a graded $I_{e}$-prime submodule of $M$.
  \item $N/I_{e}N$ is a graded weakly prime submodule of $M/I_{e}N.$
\end{enumerate}
\end{theorem}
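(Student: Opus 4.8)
The plan is to prove this as an equivalence by unwinding the definition of graded weakly prime submodule applied to the quotient module $M/I_eN$, and checking it matches the definition of graded $I_e$-prime submodule for $N$. The central observation is that the quotient $M/I_eN$ inherits a $G$-grading with $g$-component $(M_g + I_eN)/I_eN$, and that $N/I_eN$ is a well-defined graded submodule of $M/I_eN$ because $I_eN \subseteq N$ (since $I_e \subseteq R$ and $N$ is a submodule, $I_eN$ is a submodule contained in $N$). A homogeneous element of $M/I_eN$ has the form $\overline{m_\lambda} = m_\lambda + I_eN$ for some $m_\lambda \in h(M)$, and the zero element of the quotient is exactly the coset $I_eN$. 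This is the dictionary that translates ``nonzero product in the quotient'' into ``product lying in $N$ but not in $I_eN$'' in the original module.

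For the forward direction $(i)\Rightarrow(ii)$, I would take homogeneous $r_h \in h(R)$ and $\overline{m_\lambda} \in h(M/I_eN)$ with $0 \neq r_h \overline{m_\lambda} \in N/I_eN$. Translating back, this says $r_h m_\lambda \in N$ but $r_h m_\lambda \notin I_eN$, i.e. $r_h m_\lambda \in N - I_eN$. By hypothesis $N$ is graded $I_e$-prime, so either $m_\lambda \in N$, giving $\overline{m_\lambda} \in N/I_eN$, or $r_h \in (N:_R M)$. For this second case I need to verify that $(N:_R M) \subseteq (N/I_eN :_R M/I_eN)$, which is immediate: if $r_h M \subseteq N$ then $r_h \overline{M} \subseteq N/I_eN$. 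Hence $N/I_eN$ is graded weakly prime.

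For the reverse direction $(iii)$... rather $(ii)\Rightarrow(i)$, I would take $r_h \in h(R)$ and $m_\lambda \in h(M)$ with $r_h m_\lambda \in N - I_eN$. Passing to the quotient gives $\overline{r_h m_\lambda} = r_h\overline{m_\lambda} \in N/I_eN$ and $r_h\overline{m_\lambda} \neq \overline{0}$ precisely because $r_h m_\lambda \notin I_eN$. By the graded weakly prime hypothesis, either $\overline{m_\lambda} \in N/I_eN$ or $r_h \in (N/I_eN :_R M/I_eN)$. In the first case $m_\lambda + I_eN \in N/I_eN$ forces $m_\lambda \in N$ (using $I_eN \subseteq N$). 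The step requiring genuine care is the second case: from $r_h (M/I_eN) \subseteq N/I_eN$ I must deduce $r_h \in (N:_R M)$. This follows because $r_h m + I_eN \in N/I_eN$ for every $m$ means $r_h m \in N$ for every $m \in M$ (again since $I_eN \subseteq N$, the coset containment lifts back to containment in $N$), so $r_h M \subseteq N$.

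The main obstacle, and the only place the argument is not purely formal, is confirming the colon-ideal correspondence $(N/I_eN :_R M/I_eN) = (N:_R M)$ in both directions and checking that the grading on the quotient is compatible so that homogeneous elements correspond as claimed. Both reduce to the containment $I_eN \subseteq N$, which guarantees that the natural projection $M \to M/I_eN$ sends homogeneous components to homogeneous components and that a coset lies in $N/I_eN$ if and only if a (hence any) representative lies in $N$. Once this correspondence is recorded, the two implications are symmetric translations and the proof closes.
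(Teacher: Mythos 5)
Your proposal is correct and follows essentially the same route as the paper: both directions are proved by the dictionary ``$0\neq r_h\overline{m_\lambda}\in N/I_eN$ in the quotient $\Longleftrightarrow$ $r_hm_\lambda\in N-I_eN$ in $M$,'' together with the identification $(N/I_eN:_RM/I_eN)=(N:_RM)$, which the paper uses implicitly and you spell out. No gap; the extra care you take with the grading on $M/I_eN$ and the colon-ideal correspondence only makes the argument more complete.
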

\begin{proof}
$(i)\Rightarrow (ii)$ Suppose that $N$ is a graded $I_{e}$-prime
submodule of $M$. Let $r_{g}\in h(R)$ and $(m_{h}+I_{e}N)\in h(M/I_{e}N)$
with $0_{M/I_{e}N}\not=(r_{g}m_{h}+I_{e}N)\in N/I_{e}N$, this yields that $%
r_{g}m_{h}\in N-I_{e}N.$ Hence, either $m_{h}\in N$ or $r_{g}M\subseteq N$
as $N$ is a graded $I_{e}$-prime submodule of $M$. Then either $%
(m_{h}+I_{e}N)\in N/I_{e}N$ or $r_{g}(M/I_{e}N)\subseteq N/I_{e}N.$
Therefore, $N/I_{e}N$ is a graded weakly prime submodule of $M/I_{e}N.$

$(i)\Rightarrow (ii)$ Suppose that $N/I_{e}N$ is a graded weakly
prime submodule of $M/I_{e}N.$ Let $r_{g}\in h(R)$ and $m_{h}\in h(M)$ such
that $r_{g}m_{h}\in N-I_{e}N.$ This follows that $\
0_{M/I_{e}N}\not=(r_{g}m_{h}+I_{e}N)=r_{g}(m_{h}+I_{e}N)\in N/I_{e}N.$ Thus,
either $r_{g}\in (N/I_{e}N:_{R}M/I_{e}N)$ or $(m_{h}+I_{e}N)\in N/I_{e}N$
and then either $r_{g}\in (N:_{R}M)$ or $m_{h}\in N.$ Therefore, $N$ is a
graded $I_{e}$-prime submodule of $M$.
\end{proof}
\begin{lemma}
Let $R$ be a $G$-graded ring, $M$ a graded $R$%
-module and $I=\oplus _{g\in G}I_{g}$ be a graded ideal of $R$ and $g\in G.$
If $M_{g}$ is a multiplication $R_{e}$-module and $N_{g}$ is a $g$-$I_{e}$%
-prime submodule of $M_{g}$ with $(N_{g}:_{R_{e}}M_{g})\subseteq I_{e}$,
then $Gr((I_{e}N_{g}:_{R_{e}}M_{g}))N_{g}=I_{e}N_{g}.$
\end{lemma}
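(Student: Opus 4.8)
The plan is to abbreviate $J=(N_{g}:_{R_{e}}M_{g})$ and $P=(I_{e}N_{g}:_{R_{e}}M_{g})$ and to lean on the fact that $M_{g}$ is a multiplication $R_{e}$-module, so that $N_{g}=JM_{g}$ and $I_{e}N_{g}=PM_{g}$. From $I_{e}N_{g}=I_{e}(JM_{g})$ one gets $I_{e}J\subseteq P$, and since the hypothesis $J\subseteq I_{e}$ is available this yields $J^{2}\subseteq I_{e}J\subseteq P\subseteq J$ (the last containment because $I_{e}N_{g}\subseteq N_{g}$). Passing to graded radicals gives $J\subseteq Gr(P)\subseteq Gr(J)$; in particular every $t\in Gr(P)$ satisfies $t^{n}\in J$, i.e. $t^{n}M_{g}\subseteq N_{g}$, for some $n$. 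I would record these identities first, since both inclusions are really assertions about the ideals $J$, $P$ and the fixed ideal $I_{e}$.

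For the inclusion $Gr((I_{e}N_{g}:_{R_{e}}M_{g}))N_{g}\subseteq I_{e}N_{g}$, which I regard as the substantive half, I would fix $t\in Gr(P)$ and prove $tN_{g}\subseteq I_{e}N_{g}$. If $t\in J$ this is immediate, since $tN_{g}\subseteq JN_{g}=J^{2}M_{g}\subseteq I_{e}JM_{g}=I_{e}N_{g}$. The case $t\notin J$ is where the $g$-$I_{e}$-prime hypothesis enters. I would argue by contradiction: assuming some $x_{g}\in N_{g}$ has $tx_{g}\notin I_{e}N_{g}$ (so $tx_{g}\in N_{g}-I_{e}N_{g}$), I perturb $x_{g}$ by an arbitrary $w_{g}\in M_{g}\setminus N_{g}$, which exists because $N_{g}\neq M_{g}$. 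Inspecting $t(x_{g}+w_{g})=tx_{g}+tw_{g}$ and applying the definition of a $g$-$I_{e}$-prime submodule to whichever of $t(x_{g}+w_{g})$ or $tw_{g}$ lands in $N_{g}-I_{e}N_{g}$, the alternatives $x_{g}+w_{g}\in N_{g}$, $w_{g}\in N_{g}$ and $t\in J$ are all excluded, so one is forced to conclude $tw_{g}\notin N_{g}$. Since $w_{g}$ was an arbitrary element outside $N_{g}$, this gives $(N_{g}:_{M_{g}}t)=N_{g}$, and then $(N_{g}:_{M_{g}}t^{k})=N_{g}$ for every $k$ by a one-line induction. This contradicts $t^{n}M_{g}\subseteq N_{g}$, which forces $(N_{g}:_{M_{g}}t^{n})=M_{g}\neq N_{g}$. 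Hence $tN_{g}\subseteq I_{e}N_{g}$ in every case, and since each element of $Gr(P)N_{g}$ is a finite sum of such products the inclusion follows.

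For the reverse inclusion $I_{e}N_{g}\subseteq Gr((I_{e}N_{g}:_{R_{e}}M_{g}))N_{g}$ I would use $N_{g}=JM_{g}$ together with $J\subseteq Gr(P)$ to reduce to placing a typical generator $bx$, with $b\in I_{e}$ and $x\in N_{g}$, inside $Gr(P)N_{g}$; writing $x=\sum a_{i}m_{i}$ with $a_{i}\in J\subseteq Gr(P)$ and $m_{i}\in M_{g}$ leaves one to absorb $b(a_{i}m_{i})\in I_{e}N_{g}$ into $Gr(P)N_{g}$. I expect this to be the main obstacle: it comes down to the containment $I_{e}\subseteq Gr(P)$, which is not produced by $J\subseteq I_{e}$ alone and appears to be the genuinely delicate point. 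I would therefore concentrate the effort here, trying to extract $I_{e}\subseteq Gr(P)$ from the hypothesis $(N_{g}:_{R_{e}}M_{g})\subseteq I_{e}$ in combination with the $g$-$I_{e}$-prime property and the multiplication structure of $M_{g}$, and being prepared to find that the reverse inclusion needs this extra input rather than following formally.
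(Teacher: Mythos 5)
Your handling of the substantive inclusion $Gr((I_{e}N_{g}:_{R_{e}}M_{g}))N_{g}\subseteq I_{e}N_{g}$ is correct and is in essence the paper's own argument: the paper splits on $r_{e}\in I_{e}$ versus $r_{e}\notin I_{e}$ (which forces $r_{e}\notin(N_{g}:_{R_{e}}M_{g})$), quotes its Theorem~2.7 for the dichotomy $(N_{g}:_{M_{g}}r_{e})=N_{g}$ or $(N_{g}:_{M_{g}}r_{e})=(I_{e}N_{g}:_{M_{g}}r_{e})$, and rules out the first alternative by exactly your descent $r_{e}^{n}M_{g}\subseteq N_{g}\Rightarrow r_{e}^{n-1}M_{g}\subseteq N_{g}\Rightarrow\cdots\Rightarrow r_{e}M_{g}\subseteq N_{g}$. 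Your perturbation argument just reproves that dichotomy inline, and your case split on $t\in J$ rather than $t\in I_{e}$ is an equivalent variant; the preliminary identities $J^{2}\subseteq I_{e}J\subseteq P\subseteq J$ and $J\subseteq Gr(P)$ are all correct.

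The half you could not complete is the half the paper gets wrong, so your hesitation is the right reaction rather than a defect of your attempt. The paper disposes of $I_{e}N_{g}\subseteq Gr((I_{e}N_{g}:_{R_{e}}M_{g}))N_{g}$ with ``Clearly, $I_{e}N_{g}\subseteq(I_{e}N_{g}:_{R_{e}}M_{g})N_{g}$,'' and that containment is false in general: with $M_{g}=JM_{g}\cdot$-type identities it only yields $(I_{e}N_{g}:_{R_{e}}M_{g})N_{g}=I_{e}J^{2}M_{g}$ against $I_{e}N_{g}=I_{e}JM_{g}$, which is the wrong way around. A concrete counterexample to the lemma itself: take $G$ trivial (or $G=\mathbb{Z}_{2}$ with all odd components zero, as in the paper's Example~2.3), $R=R_{e}=k[x,y]$, $M=R$ (a multiplication module), $N=(x)$, $I=(x,y)$. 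Then $N$ is prime, hence $g$-$I_{e}$-prime, and $(N:_{R}M)=(x)\subseteq I_{e}$, but $I_{e}N=(x^{2},xy)$, $(I_{e}N:_{R}M)N=(x^{3},x^{2}y)$, and $Gr((I_{e}N:_{R}M))N=(x)(x)=(x^{2})$, which does not contain $xy$. So the asserted equality fails; only the inclusion you proved is valid, and the missing direction would indeed require extra input such as $I_{e}\subseteq Gr((I_{e}N_{g}:_{R_{e}}M_{g}))$, which the stated hypotheses do not supply. State the lemma as the one-sided containment (or add such a hypothesis) rather than trying to close your third paragraph.
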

\begin{proof}
Suppose that $N_{g}$ is a $g$-$I_{e}$-prime submodule of $M_{g}$
with $(N_{g}:_{R_{e}}M_{g})\subseteq I_{e}$. Clearly, $I_{e}N_{g}\subseteq
(I_{e}N_{g}:_{R_{e}}M_{g})N_{g}\subseteq
Gr((I_{e}N_{g}:_{R_{e}}M_{g}))N_{g}. $ Now, let $r_{e}\in
Gr((I_{e}N_{g}:_{R_{e}}M_{g})).$ If $r_{e}\in I_{e}$, then $%
r_{e}N_{g}\subseteq I_{e}N_{g}.$ So we can assume that $r_{e}\not\in I_{e},$
which follows that $r_{e}\not\in (N_{g}:_{R_{e}}M_{g}),$ then by Theorem 2.7, either $(N_{g}:_{M_{g}}r_{e})=N_{g}$ or $%
(N_{g}:_{M_{g}}r_{e})=(I_{e}N_{g}:_{M_{g}}r_{e}).$ If $%
(N_{g}:_{M_{g}}r_{e})=(I_{e}N_{g}:_{M_{g}}r_{e}),$ then $%
r_{e}N_{g}\subseteq
r_{e}(N_{g}:_{M_{g}}r_{e})=r_{e}(I_{e}N_{g}:_{M_{g}}r_{e})\subseteq
I_{e}N_{g}.$ On the other hand, if $(N_{g}:_{M_{g}}r_{e})=N_{g}$, let $n\geq
2$ be the smallest integer such that $r_{e}^{n}\in
(I_{e}N_{g}:_{R_{e}}M_{g}).$ Since $I_{e}N_{g}\subseteq N_{g}$, $%
r_{e}^{n}M_{g}=r_{e}(r_{e}^{n-1}M_{g})\subseteq N_{g}$ which yields that $%
(r_{e}^{n-1}M_{g})\subseteq N_{g}.$ Thus, after a finite number of steps we
get $r_{e}M_{g}\subseteq N_{g}$ which is a contradiction. Hence, $%
Gr((I_{e}N_{g}:_{R_{e}}M_{g}))N_{g}\subseteq I_{e}N_{g}.$ Therefore, $%
Gr((I_{e}N_{g}:_{R_{e}}M_{g}))N_{g}=I_{e}N_{g}.$
\end{proof}
\begin{theorem}
Let $R$ be a $G$-graded ring, $M$ a graded $R$%
-module, $I=\oplus _{g\in G}I_{g}$ a graded ideal of $R$, $N=\oplus
_{g\in G}N_{g}$ a graded submodule of $M$ and $g\in G$ with $%
(I_{e}N_{g}:_{R_{e}}M_{g})=I_{e}(N_{g}:_{R_{e}}M_{g}).$ If $N_{g}$ is a $g$-$%
I_{e}$-prime submodule of $M_{g}$, then $(N_{g}:_{R_{e}}M_{g})$ is an $e$-$%
I_{e}$-prime ideal of $R_{e}.$
\end{theorem}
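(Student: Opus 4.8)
The plan is to set $J_e=(N_g:_{R_e}M_g)$ and check the two requirements of an $e$-$I_e$-prime ideal directly. First, $J_e\neq R_e$: if $1\in J_e$ then $M_g=1\cdot M_g\subseteq N_g$, contradicting $N_g\neq M_g$ (which holds because $N_g$ is $g$-$I_e$-prime). For the main condition, I would take $r_e,s_e\in R_e$ with $r_es_e\in J_e-I_eJ_e$ and translate this using the standing hypothesis $(I_eN_g:_{R_e}M_g)=I_e(N_g:_{R_e}M_g)=I_eJ_e$: it says exactly that $r_es_eM_g\subseteq N_g$ while $r_es_eM_g\not\subseteq I_eN_g$. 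The goal is then to show $r_e\in J_e$ or $s_e\in J_e$.

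I would argue by contradiction, assuming $r_e\notin J_e$ and $s_e\notin J_e$. Since $r_es_eM_g\not\subseteq I_eN_g$, choose $m_g\in M_g$ with $r_es_em_g\notin I_eN_g$; as also $r_es_em_g\in N_g$ and $s_em_g\in R_eM_g\subseteq M_g$, the product $r_e(s_em_g)$ lies in $N_g-I_eN_g$, so the $g$-$I_e$-prime hypothesis forces $s_em_g\in N_g$ (using $r_e\notin J_e$). On the other hand $s_e\notin J_e$ provides $m_g'\in M_g$ with $s_em_g'\notin N_g$; running the same reasoning on $r_e(s_em_g')\in N_g$ shows that necessarily $r_es_em_g'\in I_eN_g$, since otherwise we would again deduce $s_em_g'\in N_g$ or $r_e\in J_e$, both excluded.

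The decisive step, which I expect to be the main obstacle, is to produce a single element witnessing both failures at once; I would test $m_g+m_g'$. Because $N_g$ and $I_eN_g$ are additive subgroups, membership of a sum controls membership of its summands: from $s_em_g\in N_g$ and $s_em_g'\notin N_g$ we get $s_e(m_g+m_g')\notin N_g$, and from $r_es_em_g\notin I_eN_g$ together with $r_es_em_g'\in I_eN_g$ we get $r_es_e(m_g+m_g')\notin I_eN_g$, while $r_es_e(m_g+m_g')\in N_g$. Hence $r_e\big(s_e(m_g+m_g')\big)\in N_g-I_eN_g$, and the $g$-$I_e$-prime property yields $s_e(m_g+m_g')\in N_g$ (again as $r_e\notin J_e$), contradicting $s_e(m_g+m_g')\notin N_g$. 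Therefore $r_e\in J_e$ or $s_e\in J_e$, so $J_e=(N_g:_{R_e}M_g)$ is an $e$-$I_e$-prime ideal of $R_e$.
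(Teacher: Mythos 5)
Your proof is correct. It differs from the paper's in one structural respect: the paper deduces the key implication by citing the characterization of $g$-$I_{e}$-prime submodules (Theorem 2.7), namely that for $r_{e}\notin (N_{g}:_{R_{e}}M_{g})$ the colon $(N_{g}:_{M_{g}}r_{e})$ equals either $N_{g}$ or $(I_{e}N_{g}:_{M_{g}}r_{e})$; since $s_{e}M_{g}\subseteq (N_{g}:_{M_{g}}r_{e})$ and, by the standing hypothesis $(I_{e}N_{g}:_{R_{e}}M_{g})=I_{e}(N_{g}:_{R_{e}}M_{g})$, one has $s_{e}M_{g}\not\subseteq (I_{e}N_{g}:_{M_{g}}r_{e})$, the second alternative is ruled out and $s_{e}M_{g}\subseteq N_{g}$ follows at once. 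You instead re-derive exactly the needed instance of that dichotomy from scratch: the witnesses $m_{g}$ (with $r_{e}s_{e}m_{g}\notin I_{e}N_{g}$) and $m_{g}'$ (with $s_{e}m_{g}'\notin N_{g}$) are combined via the sum $m_{g}+m_{g}'$, using that $N_{g}$ and $I_{e}N_{g}$ are additive subgroups, to produce a single element on which the $g$-$I_{e}$-prime hypothesis bites. This is the same ``sum of witnesses'' device the paper itself uses in the proof of Theorem 2.4, so your argument is entirely in the spirit of the paper while being self-contained (it does not depend on Theorem 2.7); the paper's version is shorter precisely because that lemma has already been established. You also explicitly check $J_{e}\neq R_{e}$, which the paper leaves tacit. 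Both your translation of $r_{e}s_{e}\in J_{e}-I_{e}J_{e}$ into $r_{e}s_{e}M_{g}\subseteq N_{g}$ but $r_{e}s_{e}M_{g}\not\subseteq I_{e}N_{g}$ (the only place the hypothesis $(I_{e}N_{g}:_{R_{e}}M_{g})=I_{e}(N_{g}:_{R_{e}}M_{g})$ is used) and every subsequent step check out.
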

\begin{proof}
Suppose that $N_{g}$ is a $g$-$I_{e}$-prime submodule of $M_{g}$. Now, let $r_{e},s_{e}\in R_{e}$ such that $r_{e}s_{e}\in
(N_{g}:_{R_{e}}M_{g})-I_{e}(N_{g}:_{R_{e}}M_{g})$ and $r_{e}\not\in
(N_{g}:_{R_{e}}M_{g}).$ By Theorem 2.7 since $r_{e}\not\in
(N_{g}:_{R_{e}}M_{g}),$ we get either $(N_{g}:_{M_{g}}r_{e})=N_{g}$ or $%
(N_{g}:_{M_{g}}r_{e})=(I_{e}N_{g}:_{M_{g}}r_{e}).$ If $r_{e}s_{e}M_{g}%
\subseteq I_{e}N_{g}$, then $r_{e}s_{e}\in
(I_{e}N_{g}:_{R_{e}}M_{g})=I_{e}(N_{g}:_{R_{e}}M_{g}),$ a contradiction.
Hence, $r_{e}s_{e}M_{g}\not\subseteq I_{e}N_{g}.$ Since $s_{e}M_{g}\subseteq
(N_{g}:_{M_{g}}r_{e})$ and $s_{e}M_{g}\not\subseteq
(I_{e}N_{g}:_{M_{g}}r_{e}),$ we get $s_{e}M_{g}\subseteq N_{g}.$ So $%
s_{e}\in (N_{g}:_{R_{e}}M_{g}).$
\end{proof}
\begin{theorem}
Let $R$ be a $G$-graded
ring, $I=\oplus _{g\in G}I_{g}$ a graded ideal of $R$ and $J=\oplus _{g\in
G}J_{g}$ a proper graded ideal of $R$. Then the following statements are
equivalent:
\begin{enumerate}[\upshape (i)]
  \item $J_{e}$ is an $e$-$I_{e}$-prime ideal of $R_{e}$.
  \item For $r_{e}\in R_{e}-J_{e}$, $(J_{e}:_{R_{e}}r_{e})=J_{e}\cup
(I_{e}J_{e}:_{R_{e}}r_{e}).$
  \item For $r_{e}\in R_{e}-J_{e}$, $(J_{e}:_{R_{e}}r_{e})=J_{e}$ or $%
(J_{e}:_{R_{e}}r_{e})=(I_{e}J_{e}:_{R_{e}}r_{e}).$

 \item For any two graded ideals $K=\oplus _{g\in G}K_{g}$ and $L=\oplus
_{h\in G}L_{h}$ of $R$ with $K_{e}L_{e}\subseteq J_{e}$ and $%
K_{e}L_{e}\not\subseteq I_{e}J_{e},$ implies either $K_{e}\subseteq J_{e}$
or $L_{e}\subseteq J_{e}$.
\end{enumerate}
\end{theorem}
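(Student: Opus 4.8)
The plan is to establish the cycle $(i)\Rightarrow(ii)\Rightarrow(iii)\Rightarrow(iv)\Rightarrow(i)$, where the first three implications run exactly parallel to the submodule characterization in Theorem 2.7 (reading $J_e$ for $N_g$, $R_e$ for $M_g$, and the colon ideal for the colon submodule), and only the two implications involving $(iv)$ carry genuinely new content.

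For $(i)\Rightarrow(ii)$ I would fix $r_e\in R_e-J_e$ and verify both inclusions in $(J_e:_{R_e}r_e)=J_e\cup(I_eJ_e:_{R_e}r_e)$. The inclusion $\supseteq$ is immediate, since $J_e$ is an ideal and $I_eJ_e\subseteq J_e$; for $\subseteq$, given $s_e$ with $r_es_e\in J_e$, I split on whether $r_es_e\in I_eJ_e$: if not, $e$-$I_e$-primeness together with $r_e\notin J_e$ forces $s_e\in J_e$; if so, $s_e\in(I_eJ_e:_{R_e}r_e)$. The step $(ii)\Rightarrow(iii)$ is the usual observation that an additive subgroup which is the union of two subgroups must equal one of them, applied to the ideal $(J_e:_{R_e}r_e)$. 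For $(iii)\Rightarrow(i)$ I would take $r_es_e\in J_e-I_eJ_e$ with $r_e\notin J_e$, note $s_e\in(J_e:_{R_e}r_e)$, and use $s_e\notin(I_eJ_e:_{R_e}r_e)$ to rule out the second alternative in $(iii)$, leaving $(J_e:_{R_e}r_e)=J_e$ and hence $s_e\in J_e$.

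The heart of the argument is $(iii)\Rightarrow(iv)$. Assume $K_eL_e\subseteq J_e$, $K_eL_e\not\subseteq I_eJ_e$, and, for contradiction, that $K_e\not\subseteq J_e$ and $L_e\not\subseteq J_e$. For any $x\in K_e-J_e$ every product $xl$ with $l\in L_e$ lies in $K_eL_e\subseteq J_e$, so $L_e\subseteq(J_e:_{R_e}x)$; applying $(iii)$ to $x$ (legitimate since $x\notin J_e$) and discarding the alternative $(J_e:_{R_e}x)=J_e$ (which would give $L_e\subseteq J_e$), I obtain $xL_e\subseteq I_eJ_e$ for every $x\in K_e-J_e$. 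The main obstacle is upgrading this to all of $K_e$: for $x\in K_e\cap J_e$ I would fix some $x_0\in K_e-J_e$, observe that $x+x_0\in K_e-J_e$, and subtract, using that $I_eJ_e$ is an additive subgroup, so that $xl=(x+x_0)l-x_0l\in I_eJ_e$. This yields $K_eL_e\subseteq I_eJ_e$, the desired contradiction. (A direct imitation of the additive case analysis in Theorem 2.4 is also possible, but the colon-ideal route above is shorter and avoids splitting into several subcases.)

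Finally, $(iv)\Rightarrow(i)$ follows by specializing to principal graded ideals. Given $r_es_e\in J_e-I_eJ_e$ with $r_e\notin J_e$, I would take $K=(r_e)$ and $L=(s_e)$, both graded since $r_e,s_e$ are homogeneous of degree $e$, and compute the degree-$e$ components $K_e=R_er_e$ and $L_e=R_es_e$. Then $K_eL_e=R_er_es_e\subseteq J_e$ while $r_es_e\in K_eL_e\setminus I_eJ_e$, so $(iv)$ gives $K_e\subseteq J_e$ or $L_e\subseteq J_e$; the first would force $r_e\in J_e$, so $s_e\in L_e\subseteq J_e$, as required. Properness of $J_e$ comes for free, since $J_e=R_e$ would give $1\in J$, contradicting that $J$ is a proper graded ideal.
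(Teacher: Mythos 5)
Your proposal is correct and follows essentially the same route as the paper: the same cycle $(i)\Rightarrow(ii)\Rightarrow(iii)\Rightarrow(iv)\Rightarrow(i)$, the same dichotomy on $r_es_e\in I_eJ_e$ for $(i)\Rightarrow(ii)$, the same union-of-two-ideals observation for $(ii)\Rightarrow(iii)$, the same two-case argument (elements of $K_e-J_e$ first, then elements of $K_e\cap J_e$ handled by translating by a fixed $x_0\in K_e-J_e$ and subtracting) for $(iii)\Rightarrow(iv)$, and the same specialization to the principal graded ideals $(r_e)$ and $(s_e)$ for $(iv)\Rightarrow(i)$. No gaps.
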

\begin{proof}
$(i)\Rightarrow (ii)$ Suppose that $J_{e}$ is an $e$-$I_{e}$-prime
ideal of $R_{e}$ and $r_{e}\in R_{e}-J_{e}$. It is easy to see that $%
J_{e}\cup (I_{e}J_{e}:_{R_{e}}r_{e})\subseteq (J_{e}:_{R_{e}}r_{e}).$ Now,
let $s_{e}\in (J_{e}:_{R_{e}}r_{e})$, so $r_{e}s_{e}\in J_{e}$. If $%
r_{e}s_{e}\in J_{e}-I_{e}J_{e}$, then $s_{e}\in J_{e}$. If $r_{e}s_{e}\in
I_{e}J_{e}$, then $s_{e}\in (I_{e}J_{e}:_{R_{e}}r_{e})$, which follows that $%
(J_{e}:_{R_{e}}r_{e})\subseteq J_{e}\cup (I_{e}J_{e}:_{R_{e}}r_{e})$ and
hence $(J_{e}:_{R_{e}}r_{e})=J_{e}\cup (I_{e}J_{e}:_{R_{e}}r_{e})$.

 $(ii)\Rightarrow (iii)$ Note that if a ideal is a union of two
ideals, then it is equal to one of them.

$(iii)\Rightarrow (iv)$ Let $K=\oplus _{g\in G}K_{g}$ and $L=\oplus _{h\in
G}L_{h}$ be two graded ideals of $R$ such that $K_{e}L_{e}\subseteq J_{e},$ $%
K_{e}L_{e}\not\subseteq I_{e}J_{e}$ and neither $K_{e}\subseteq J_{e}$ nor $%
L_{e}\subseteq J_{e}$. Let $k_{e}\in K_{e}$, if $k_{e}\not\in J_{e}$, then $%
k_{e}L_{e}\subseteq J_{e}$ gives $L_{e}\subseteq (J_{e}:_{R_{e}}k_{e})$.
Now, by $(iii)$ since $k_{e}\in R_{e}-J_{e}$, $(J_{e}:_{R_{e}}k_{e})=J_{e}$
or $(J_{e}:_{R_{e}}k_{e})=(I_{e}J_{e}:_{R_{e}}k_{e}).$ But $%
L_{e}\not\subseteq J_{e}$, so $L_{e}\subseteq (I_{e}J_{e}:_{R_{e}}k_{e})$.
Hence, $k_{e}L_{e}\subseteq I_{e}J_{e}$. Now, if $k_{e}\in J_{e}$, then
since $K_{e}\not\subseteq J_{e}$, there exists $k_{e}^{^{\prime }}\in
K_{e}-J_{e}$, so we have $(k_{e}+k_{e}^{^{\prime }})L_{e}\subseteq J_{e}$
and by using the first case we get $k_{e}^{^{\prime }}L_{e}\subseteq
I_{e}J_{e}.$ Since $k_{e}+k_{e}^{^{\prime }}\in R_{e}-J_{e}$ and $%
L_{e}\not\subseteq J_{e},$ $L_{e}\subseteq
(I_{e}J_{e}:_{R_{e}}k_{e}+k_{e}^{^{\prime }})$ and then $(k_{e}+k_{e}^{%
\prime })L_{e}\subseteq I_{e}J_{e}.$ Hence $k_{e}L_{e}\subseteq I_{e}J_{e}$
since $k_{e}^{^{\prime }}L_{e}\subseteq I_{e}J_{e}.$ Thus, $%
K_{e}L_{e}\subseteq I_{e}J_{e},$ a contradiction.

$(iv)\Rightarrow (i)$ Let $r_{e},s_{e}\in R_{e}$ such that $r_{e}s_{e}\in
J_{e}-I_{e}J_{e}$. Then $K=(r_{e})$ and $L=(s_{e})$ are graded ideals of $%
R $ generated by $r_{e}$ and $s_{e},$ respectively. Now, $%
K_{e}L_{e}\subseteq J_{e}$ and $K_{e}L_{e}\not\subseteq I_{e}J_{e}$. So
either $K_{e}\subseteq J_{e}$ or $L_{e}\subseteq J_{e}$ and hence $r_{e}\in
J_{e}$ or $s_{e}\in J_{e}$.
\end{proof}

\begin{theorem}
Let $R$ be a $G$-graded ring, $M$ a graded $R$%
-module, $I=\oplus _{g\in G}I_{g}$ a graded ideal of $R$, $N=\oplus _{g\in
G}N_{g}$ a proper graded submodule of $M$ and $g\in G$ such that $%
(I_{e}N_{g}:_{R_{e}}M_{g})=I_{e}(N_{g}:_{R_{e}}M_{g}).$ If $N_{g}$ is a $g$-$I_{e}$-prime submodule of $M_{g}$ and $M_{g}$ is a
multiplication $R_{e}$-module, then for any graded submodules $K=\oplus
_{h\in G}K_{h}$ and $L=\oplus _{h\in G}L_{h}$ with $K_{g}L_{g}\subseteq
N_{g} $ and $K_{g}L_{g}\not\subseteq I_{e}N_{g}$, implies either $%
K_{g}\subseteq N_{g}$ or $L_{g}\subseteq N_{g}.$
\end{theorem}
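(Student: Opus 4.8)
The plan is to transport the whole statement from submodules of the multiplication module $M_g$ into a statement about ideals of $R_e$, where the machinery of the earlier theorems applies directly. Since $M_g$ is a multiplication $R_e$-module, I would first write the relevant graded components canonically as $K_g=A M_g$ and $L_g=B M_g$, where $A=(K_g:_{R_e}M_g)$ and $B=(L_g:_{R_e}M_g)$ are ideals of $R_e$; recall that for a multiplication module one has $K_g=(K_g:_{R_e}M_g)M_g$ and likewise for $L_g$. By the definition of the product of submodules of a multiplication module this gives $K_g L_g=AB\,M_g$. Setting $J_e=(N_g:_{R_e}M_g)$ and using $N_g=J_e M_g$, the hypothesis $K_g L_g\subseteq N_g$ becomes $AB\,M_g\subseteq N_g$, i.e. $AB\subseteq J_e$, while $K_g L_g\not\subseteq I_eN_g$ becomes $AB\,M_g\not\subseteq I_eN_g$, i.e. $AB\not\subseteq (I_eN_g:_{R_e}M_g)$. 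Here I would invoke the standing hypothesis $(I_eN_g:_{R_e}M_g)=I_e(N_g:_{R_e}M_g)=I_eJ_e$ to rephrase this last condition as $AB\not\subseteq I_eJ_e$.

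Next I would bring in Theorem 2.10: since $N_g$ is a $g$-$I_e$-prime submodule of $M_g$ and the colon identity $(I_eN_g:_{R_e}M_g)=I_e(N_g:_{R_e}M_g)$ holds, the ideal $J_e=(N_g:_{R_e}M_g)$ is an $e$-$I_e$-prime ideal of $R_e$. To exploit the ideal-product form of this property (Theorem 2.11, part (iv)), I would realise the $R_e$-ideals $A$ and $B$ as $e$-components of graded ideals of $R$: for an ideal $A$ of $R_e$ the graded ideal $RA=\oplus_{h\in G}R_hA$ has $e$-component $(RA)_e=R_eA=A$, and similarly for $B$. Applying Theorem 2.11 (iv) to the graded ideals $RA$ and $RB$, whose $e$-components multiply to $(RA)_e(RB)_e=AB\subseteq J_e$ with $AB\not\subseteq I_eJ_e$, I obtain $A=(RA)_e\subseteq J_e$ or $B=(RB)_e\subseteq J_e$.

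Finally I would translate back to submodules. If $A\subseteq J_e=(N_g:_{R_e}M_g)$, then $K_g=A M_g\subseteq (N_g:_{R_e}M_g)M_g=N_g$, again using that $M_g$ is a multiplication module so that $N_g=(N_g:_{R_e}M_g)M_g$; symmetrically, $B\subseteq J_e$ forces $L_g\subseteq N_g$. This yields the required dichotomy $K_g\subseteq N_g$ or $L_g\subseteq N_g$. The one point demanding care is the bookkeeping in the first paragraph, namely confirming that $K_g L_g$ really equals $AB\,M_g$ and that $K_g L_g\not\subseteq I_eN_g$ is equivalent to $AB\not\subseteq I_eJ_e$; this is precisely where the hypothesis $(I_eN_g:_{R_e}M_g)=I_e(N_g:_{R_e}M_g)$ is needed. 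Once the problem is recast in terms of the ideal $J_e$, the conclusion is an immediate consequence of Theorems 2.10 and 2.11, so I do not expect any genuine difficulty beyond this translation.
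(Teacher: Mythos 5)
Your proposal is correct and follows essentially the same route as the paper: both pass to the colon ideals $(K_g:_{R_e}M_g)$, $(L_g:_{R_e}M_g)$, $(N_g:_{R_e}M_g)$ via the multiplication-module identity $K_g=(K_g:_{R_e}M_g)M_g$, invoke Theorem 2.10 to get that $(N_g:_{R_e}M_g)$ is $e$-$I_e$-prime, and then apply Theorem 2.11(iv) together with the hypothesis $(I_eN_g:_{R_e}M_g)=I_e(N_g:_{R_e}M_g)$. The only differences are cosmetic: the paper argues by contradiction while you argue directly, and you spell out the (correct) realization of the $R_e$-ideals $A,B$ as $e$-components of the graded ideals $RA,RB$, a step the paper leaves implicit.
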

\begin{proof}
Suppose that $N_{g}$ is a $g$-$I_{e}$-prime submodule of $M_{g}$ so
by Theorem 2.10, we get $(N_{g}:_{R_{e}}M_{g})$ is an $e$-$I_{e}$%
-prime ideal of $R_{e}.$ Now, let $K=\oplus _{h\in G}K_{h}$ and $L=\oplus
_{h\in G}L_{h}$ be two graded submodules of $M$ such that $%
K_{g}L_{g}\subseteq N_{g}$ and $K_{g}L_{g}\not\subseteq I_{e}N_{g}$ and
neither $K_{g}\subseteq N_{g}$ nor $L_{g}\subseteq N_{g}.$ Hence, $%
K_{g}L_{g}=(K_{g}:_{R_{e}}M_{g})(L_{g}:_{R_{e}}M_{g})M_{g}\subseteq N_{g}$
and then $(K_{g}:_{R_{e}}M_{g})(L_{g}:_{R_{e}}M_{g})\subseteq
(N_{g}:_{R_{e}}M_{g}).$ Now, $K_{g}=(K_{g}:_{R_{e}}M_{g})M_{g}\not\subseteq
N_{g}$ gives $(K_{g}:_{R_{e}}M_{g})\not\subseteq (N_{g}:_{R_{e}}M_{g}),$ and
$L_{g}=(L_{g}:_{R_{e}}M_{g})M_{g}\not\subseteq N_{g}$ gives $%
(L_{g}:_{R_{e}}M_{g})\not\subseteq (N_{g}:_{R_{e}}M_{g}).$ So by Theorem 2.11, we get $(K_{g}:_{R_{e}}M_{g})(L_{g}:_{R_{e}}M_{g})\subseteq
I_{e}(N_{g}:_{R_{e}}M_{g})=(I_{e}N_{g}:_{R_{e}}M_{g})$ which yields that $%
K_{g}L_{g}=(K_{g}:_{R_{e}}M_{g})(L_{g}:_{R_{e}}M_{g})M_{g}\subseteq
I_{e}N_{g}$, a contradiction. Therefore, either $K_{g}\subseteq N_{g}$ or $%
L_{g}\subseteq N_{g}.$
\end{proof}

Suppose $M_{g}$ is a multiplication $R_{e}$-module and $m_{1_{g}}$, $%
m_{2_{g}}\in M_{g}$. Then we can define the product of $m_{1_{g}}$ and $m_{2_{g}}$ as $m_{1_{g}}m_{2_{g}}$ $%
=R_{e}m_{1_{g}}\
R_{e}m_{2_{g}}=(R_{e}m_{1_{g}}:_{R_{e}}M_{g})(R_{e}m_{2_{g}}:M_{g})M_{g}.$ Thus we have the following corollary.
\begin{corollary}
Let $R$ be a $G$-graded ring, $M$ a graded $%
R $-module, $I=\oplus _{g\in G}I_{g}$ a graded ideal of $R$, $N=\oplus
_{g\in G}N_{g}$ a proper graded submodule of $M$ and $g\in G$ such that $%
(I_{e}N_{g}:_{R_{e}}M_{g})=I_{e}(N_{g}:_{R_{e}}M_{g})$. If $M_{g}$ is a
multiplication $R_{e}$-module and $N_{g}$ is a $g$-$I_{e}$-prime submodule
of $M_{g}$, then for any $m_{1_{g}},m_{2_{g}}\in M_{g}$ with $%
m_{1_{g}}m_{2_{g}}\in N_{g}-I_{e}N_{g}$, implies either $m_{1_{g}}\in N_{g}$
or $m_{2_{g}}\in N_{g}.$
\end{corollary}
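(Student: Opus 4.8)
The plan is to reduce the statement to Theorem 2.12 by realizing the cyclic $R_e$-submodules $R_e m_{1_g}$ and $R_e m_{2_g}$ of $M_g$ as the degree-$g$ components of suitable graded submodules of $M$. First I would set $K=Rm_{1_g}$ and $L=Rm_{2_g}$, the $R$-submodules of $M$ generated by the homogeneous elements $m_{1_g},m_{2_g}\in M_g$. Since $m_{1_g}$ is homogeneous, $K$ is a graded submodule: writing $r=\sum_{h}r_{h}$, each term of $rm_{1_g}=\sum_{h}r_{h}m_{1_g}$ satisfies $r_{h}m_{1_g}\in R_{h}M_g\subseteq M_{hg}$, so every homogeneous component of $rm_{1_g}$ again lies in $K$. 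Reading off the degree-$g$ part forces $hg=g$, i.e. $h=e$, whence $K_g=K\cap M_g=R_e m_{1_g}$, and likewise $L_g=R_e m_{2_g}$.

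Next I would identify the products. Because $M_g$ is a multiplication $R_e$-module, the product of the submodules $K_g=R_e m_{1_g}$ and $L_g=R_e m_{2_g}$ is by definition $(K_g:_{R_e}M_g)(L_g:_{R_e}M_g)M_g$, which is precisely the product $m_{1_g}m_{2_g}$ introduced in the paragraph preceding the statement. Hence the hypotheses $m_{1_g}m_{2_g}\subseteq N_g$ and $m_{1_g}m_{2_g}\not\subseteq I_e N_g$ translate verbatim into $K_g L_g\subseteq N_g$ and $K_g L_g\not\subseteq I_e N_g$. At this point all the hypotheses of Theorem 2.12 are in force: $N$ is a proper graded submodule, $(I_e N_g:_{R_e}M_g)=I_e(N_g:_{R_e}M_g)$ holds by assumption, $N_g$ is $g$-$I_e$-prime, $M_g$ is a multiplication $R_e$-module, and $K,L$ are graded submodules with $K_g L_g\subseteq N_g$ but $K_g L_g\not\subseteq I_e N_g$. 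Applying Theorem 2.12 yields $K_g\subseteq N_g$ or $L_g\subseteq N_g$. Finally, since $1\in R_e$ we have $m_{1_g}\in R_e m_{1_g}=K_g$ and $m_{2_g}\in R_e m_{2_g}=L_g$, so the conclusion reads $m_{1_g}\in N_g$ or $m_{2_g}\in N_g$.

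The argument is essentially bookkeeping, so I do not anticipate a genuine obstacle. The one point deserving care is the verification that $K_g=R_e m_{1_g}$ exactly: that passing to the degree-$g$ part of the cyclic graded submodule $Rm_{1_g}$ recovers $R_e m_{1_g}$ and nothing larger. This identification is what lets me invoke the submodule-level Theorem 2.12 directly on $K_g,L_g$ instead of re-running its proof, and it also guarantees that the ambient objects $K,L$ are legitimately graded, as Theorem 2.12 requires.
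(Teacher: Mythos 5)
Your proposal is correct and follows essentially the same route as the paper: both take the cyclic graded submodules $K=(m_{1_g})$ and $L=(m_{2_g})$, observe that $K_gL_g\subseteq N_g$ and $K_gL_g\not\subseteq I_eN_g$, and invoke Theorem 2.12. The only difference is that you spell out the verifications ($K_g=R_em_{1_g}$ and the identification of $K_gL_g$ with the product $m_{1_g}m_{2_g}$) that the paper leaves implicit.
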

\begin{proof}
Let $m_{1_{g}},m_{2_{g}}\in M_{g}$ such that $m_{1_{g}}m_{2_{g}}\in
N_{g}-I_{e}N_{g}.$ Then $K=(m_{1_{g}})$ and $L=(m_{2_{g}})$ are graded
submodules of $M$ generated by $m_{1_{g}}$ and $m_{2_{g}}$, respectively.
Since $K_{g}L_{g}\subseteq N_{g}$ and $K_{g}L_{g}\not\subseteq I_{e}N_{g}$,
by Theorem 2.12, we get either $m_{1_{g}}\in N_{g}$ or $%
m_{2_{g}}\in N_{g}.$
\end{proof}


\begin{theorem}
Let $R$ be a $G$-graded ring, $M_{1}$ and $M_{2}$ be
two graded $R$-modules, $I=\oplus _{g\in G}I_{g}$ a graded ideal of $R$ and $%
N_{1}$ and $N_{2}$ be two graded submodules of $M_{1}$ and $M_{2}$,
respectively. Then:
\begin{enumerate}[\upshape (i)]
  \item If $N_{1}$ is a graded $I_{e}$-prime submodule of $M_{1},$ then $%
N_{1}\times M_{2}$ is a graded $I_{e}$-prime submodule of $M_{1}\times
M_{2}. $
  \item If $N_{2}$ is a graded $I_{e}$-prime submodule of $M_{2},$ then $%
M_{1}\times N_{2}$ is a graded $I_{e}$-prime submodule of $M_{1}\times
M_{2}. $
\end{enumerate}
\end{theorem}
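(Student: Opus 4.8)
The plan is to prove part (i) and note that part (ii) follows by the symmetric argument (swapping the roles of the two factors). So I focus on showing that if $N_{1}$ is a graded $I_{e}$-prime submodule of $M_{1}$, then $N_{1}\times M_{2}$ is a graded $I_{e}$-prime submodule of $M_{1}\times M_{2}$. First I would record two preliminary facts about the product module $M_{1}\times M_{2}$, which is itself $G$-graded with homogeneous components $(M_{1}\times M_{2})_{g}=(M_{1})_{g}\times (M_{2})_{g}$, so that a typical homogeneous element has the form $(m_{h},m'_{h})$ with $m_{h}\in h(M_{1})$ and $m'_{h}\in h(M_{2})$. The two facts are: $(N_{1}\times M_{2}:_{R}M_{1}\times M_{2})=(N_{1}:_{R}M_{1})$, and $I_{e}(N_{1}\times M_{2})=I_{e}N_{1}\times M_{2}$. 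The first holds because $r$ sends all of $M_{1}\times M_{2}$ into $N_{1}\times M_{2}$ exactly when $rM_{1}\subseteq N_{1}$ (the second coordinate condition $rM_{2}\subseteq M_{2}$ being automatic), and the second is immediate since $I_{e}M_{2}\subseteq M_{2}$.

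Next I would verify that $N_{1}\times M_{2}$ is proper: this is clear because $N_{1}\neq M_{1}$ forces $N_{1}\times M_{2}\neq M_{1}\times M_{2}$. Then comes the main verification. Take homogeneous $r_{g}\in h(R)$ and a homogeneous element $(m_{h},m'_{h})\in h(M_{1}\times M_{2})$ with
\[
r_{g}(m_{h},m'_{h})=(r_{g}m_{h},\,r_{g}m'_{h})\in (N_{1}\times M_{2})-I_{e}(N_{1}\times M_{2}).
\]
Using the second preliminary fact, membership in $(N_{1}\times M_{2})-(I_{e}N_{1}\times M_{2})$ means $r_{g}m_{h}\in N_{1}$, $r_{g}m'_{h}\in M_{2}$ (automatic), and that the pair is not in $I_{e}N_{1}\times M_{2}$. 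The key observation is that the latter non-membership forces $r_{g}m_{h}\notin I_{e}N_{1}$: indeed, since the second coordinate $r_{g}m'_{h}$ always lies in $M_{2}$, the only way $(r_{g}m_{h},r_{g}m'_{h})$ can fail to lie in $I_{e}N_{1}\times M_{2}$ is for the first coordinate to fail, i.e. $r_{g}m_{h}\notin I_{e}N_{1}$. Hence $r_{g}m_{h}\in N_{1}-I_{e}N_{1}$.

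Now I apply the hypothesis that $N_{1}$ is a graded $I_{e}$-prime submodule of $M_{1}$: from $r_{g}m_{h}\in N_{1}-I_{e}N_{1}$ we conclude either $m_{h}\in N_{1}$ or $r_{g}\in (N_{1}:_{R}M_{1})$. In the first case $(m_{h},m'_{h})\in N_{1}\times M_{2}$, and in the second case $r_{g}\in (N_{1}:_{R}M_{1})=(N_{1}\times M_{2}:_{R}M_{1}\times M_{2})$ by the first preliminary fact, which is exactly what is required. This establishes that $N_{1}\times M_{2}$ is a graded $I_{e}$-prime submodule of $M_{1}\times M_{2}$, proving (i); part (ii) is entirely symmetric. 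I expect the only genuine subtlety to be the step identifying $I_{e}(N_{1}\times M_{2})$ with $I_{e}N_{1}\times M_{2}$ and then extracting $r_{g}m_{h}\notin I_{e}N_{1}$ from the hypothesis $(r_{g}m_{h},r_{g}m'_{h})\notin I_{e}(N_{1}\times M_{2})$; once that coordinate-wise translation is pinned down, the rest is a direct appeal to the defining property of $N_{1}$.
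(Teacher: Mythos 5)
Your argument contains a genuine gap, and it sits exactly where you predicted the ``only genuine subtlety'' would be: the identity $I_{e}(N_{1}\times M_{2})=I_{e}N_{1}\times M_{2}$ is false. The product of the ideal $I_{e}$ with the submodule $N_{1}\times M_{2}$ is the set of finite sums $\sum a_{i}(n_{i},m_{i})$, and since $(n,0)$ and $(0,m)$ both lie in $N_{1}\times M_{2}$ one gets $I_{e}(N_{1}\times M_{2})=I_{e}N_{1}\times I_{e}M_{2}$, which is in general strictly smaller than $I_{e}N_{1}\times M_{2}$; the inclusion $I_{e}M_{2}\subseteq M_{2}$ you cite only yields one containment, not equality. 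Consequently your ``key observation'' fails: $(r_{g}m_{h},r_{g}m'_{h})\notin I_{e}(N_{1}\times M_{2})$ only tells you that $r_{g}m_{h}\notin I_{e}N_{1}$ \emph{or} $r_{g}m'_{h}\notin I_{e}M_{2}$, and in the second case you cannot conclude $r_{g}m_{h}\in N_{1}-I_{e}N_{1}$, so the defining property of $N_{1}$ cannot be invoked.

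This is not a repairable slip in your write-up alone: the statement itself is false, and the paper's own proof commits the mirror-image error by asserting $(N_{1}\times M_{2})-I_{e}(N_{1}\times M_{2})=(N_{1}-I_{e}N_{1})\times (M_{2}-I_{e}M_{2})$ (a set difference of products is not the product of the set differences). For a counterexample, take the data of the paper's Example 2.3: $G=\mathbb{Z}_{2}$, $R=\mathbb{Z}$ trivially graded, $I=4\mathbb{Z}$, $M_{1}=\mathbb{Z}_{12}$, $N_{1}=\langle 4\rangle$, so that $I_{e}N_{1}=N_{1}$ and $N_{1}$ is vacuously a graded $I_{e}$-prime submodule; and let $M_{2}=\mathbb{Z}$. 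Then $I_{e}(N_{1}\times M_{2})=\langle 4\rangle \times 4\mathbb{Z}$, and for $r=2$ and $(m,m')=(2,1)$ we have $r(m,m')=(4,2)\in (N_{1}\times M_{2})-I_{e}(N_{1}\times M_{2})$, yet $(2,1)\notin N_{1}\times M_{2}$ and $2\notin (N_{1}\times M_{2}:_{R}M_{1}\times M_{2})=(N_{1}:_{R}M_{1})=4\mathbb{Z}$. Your argument (and the paper's) does become correct under extra hypotheses that eliminate the problematic case, for instance $I_{e}M_{2}=M_{2}$, or replacing the test region $(N_{1}\times M_{2})-I_{e}(N_{1}\times M_{2})$ by $(N_{1}\times M_{2})-(I_{e}N_{1}\times M_{2})$; as stated, both the proposal and the theorem fail.
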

\begin{proof}
$(i)$ Suppose that $N_{1}$ is a graded $I_{e}$-prime submodule of $%
M_{1}.$ Now, let $r_{g}\in h(R)$ and $(m_{1_{h}},m_{2_{h}})\in h(M_{1}\times
M_{2})$ such that $%
r_{g}(m_{1_{h}},m_{2_{h}})=(r_{g}m_{1_{h}},r_{g}m_{2_{h}})\in (N_{1}\times
M_{2})-I_{e}(N_{1}\times M_{2})=(N_{1}-I_{e}N_{1})\times (M_{2}-I_{e}M_{2}),$
which follows that $r_{g}m_{1_{h}}\in N_{1}-I_{e}N_{1}.$ Hence, either $%
m_{1_{h}}\in N_{1}$ or $r_{g}M_{1}\subseteq N_{1}$ and then either $%
(m_{1_{h}},m_{2_{h}})\in N_{1}\times M_{2}$ or $r_{g}(M_{1}\times
M_{2})\subseteq N_{1}\times M_{2}.$ Therefore, $N_{1}\times M_{2}$ is a
graded $I_{e}$-prime submodule of $M_{1}\times M_{2}.$

$(ii)$ The proof is similar to $(i).$
\end{proof}


\begin{theorem}
Let $R$ be a $G$-graded ring, $M$ a graded $%
R $-module, $I=\oplus _{g\in G}I_{g}$ a graded ideal of $R$ and $N$ a proper
graded submodule of $M.$ Let $K=\oplus _{h\in G}K_{h}$ be a graded submodule
of $M$. Then the following statements are equivalent:
\begin{enumerate}[\upshape (i)]
  \item $N$ is a graded $I_{e}$-prime submodule of $M.$
  \item For any $r_{g}\in h(R)$ and $h\in G$ with $r_{g}K_{h}\subseteq N$ and
$r_{g}K_{h}\not\subseteq I_{e}N,$ implies either $K_{h}\subseteq N$ or $%
r_{g}\in (N:_{R}M).$

\end{enumerate}
\end{theorem}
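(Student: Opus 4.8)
The plan is to prove the two implications directly from the definition of a graded $I_{e}$-prime submodule, treating statement (ii) as ranging over graded submodules $K$ of $M$ (so that in the converse direction I am free to instantiate $K$). Throughout I will use that $I_{e}$ is an ideal of $R_{e}$ and $N$ an $R_{e}$-submodule, so that $I_{e}N$ is an additive subgroup of $M$; this is what legitimises the subtraction arguments below.

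For $(i)\Rightarrow(ii)$, I would assume $N$ is graded $I_{e}$-prime and take $r_{g}\in h(R)$ and $h\in G$ with $r_{g}K_{h}\subseteq N$ and $r_{g}K_{h}\not\subseteq I_{e}N$, and I would aim to show that $r_{g}\notin (N:_{R}M)$ forces $K_{h}\subseteq N$. The non-containment $r_{g}K_{h}\not\subseteq I_{e}N$ first produces a homogeneous $k_{h}\in K_{h}$ with $r_{g}k_{h}\in N-I_{e}N$, and the defining property of $N$ (together with $r_{g}\notin (N:_{R}M)$) gives $k_{h}\in N$. Then, for an arbitrary $x_{h}\in K_{h}$, I split into cases. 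If $r_{g}x_{h}\notin I_{e}N$, then $r_{g}x_{h}\in N-I_{e}N$ yields $x_{h}\in N$ immediately. If instead $r_{g}x_{h}\in I_{e}N$, I test the element $x_{h}+k_{h}\in K_{h}$: since $I_{e}N$ is an additive subgroup, $r_{g}x_{h}\in I_{e}N$ and $r_{g}k_{h}\notin I_{e}N$ force $r_{g}(x_{h}+k_{h})\notin I_{e}N$, while $r_{g}(x_{h}+k_{h})\in N$; hence $r_{g}(x_{h}+k_{h})\in N-I_{e}N$, so $x_{h}+k_{h}\in N$, and subtracting $k_{h}\in N$ gives $x_{h}\in N$. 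Thus $K_{h}\subseteq N$.

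For $(ii)\Rightarrow(i)$, I would take $r_{g}\in h(R)$ and $m_{\lambda}\in h(M)$ with $r_{g}m_{\lambda}\in N-I_{e}N$ and apply (ii) to the cyclic graded submodule $K=Rm_{\lambda}$ at the index $h=\lambda$, noting that its $\lambda$-component is $K_{\lambda}=R_{e}m_{\lambda}$. Because $r_{g}m_{\lambda}\in N$ and $N$ is a submodule, $r_{g}R_{e}m_{\lambda}\subseteq N$, i.e. $r_{g}K_{\lambda}\subseteq N$; and because $m_{\lambda}=1\cdot m_{\lambda}\in K_{\lambda}$ with $r_{g}m_{\lambda}\notin I_{e}N$, we get $r_{g}K_{\lambda}\not\subseteq I_{e}N$. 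Then (ii) yields either $K_{\lambda}\subseteq N$, whence $m_{\lambda}\in N$, or $r_{g}\in (N:_{R}M)$, which is exactly the dichotomy required by the definition.

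I do not anticipate a genuine obstacle here; the delicate points are purely bookkeeping. The one step deserving care is the translation device in $(i)\Rightarrow(ii)$ that replaces $x_{h}$ by $x_{h}+k_{h}$ in order to escape $I_{e}N$, since this is the standard mechanism by which an $I_{e}$-prime hypothesis is reduced to the prime-type conclusion, and it relies crucially on $I_{e}N$ being closed under subtraction. The remaining subtlety is simply the identification $(Rm_{\lambda})_{\lambda}=R_{e}m_{\lambda}$, which follows from the grading convention $R_{s}m_{\lambda}\subseteq M_{s\lambda}$ and the fact that $1\in R_{e}$.
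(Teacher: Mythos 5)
Your proposal is correct and follows essentially the same route as the paper: the forward direction fixes a witness $k_{h}$ with $r_{g}k_{h}\in N-I_{e}N$ and uses the translation $x_{h}\mapsto x_{h}+k_{h}$ to escape $I_{e}N$, and the converse instantiates $K$ as the cyclic graded submodule generated by $m_{\lambda}$. Your explicit identification $(Rm_{\lambda})_{\lambda}=R_{e}m_{\lambda}$ is a detail the paper leaves implicit, but the argument is the same.
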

\begin{proof}
$(i)\Rightarrow (ii)$ Suppose that $N$ is a graded $I_{e}$-prime
submodule of $M.$ Now, let $r_{g}\in h(R)$ and $h\in G$ such that $%
r_{g}K_{h}\subseteq N$, $r_{g}K_{h}\not\subseteq I_{e}N$ and $r_{g}\not\in
(N:_{R}M).$ Let $k_{h}\in K_{h}$, if $r_{g}k_{h}\not\in I_{e}N,$ then $r_{g}k_{h}\in
N-I_{e}N$. So $k_{h}\in N$ as $N$ \ is a graded $I_{e}$-prime submodule of $%
M $ $.$ Now, if $r_{g}k_{h}\in I_{e}N,$ since $r_{g}K_{h}\not\subseteq
I_{e}N,$ there exists $k_{h}^{^{\prime }}\in K_{h}$ such that $%
r_{g}k_{h}^{^{\prime }}\not\in I_{e}N,$ but $r_{g}k_{h}^{^{\prime }}\in
N-I_{e}N$ and $r_{g}\not\in (N:_{R}M),$ so $k_{h}^{^{\prime }}\in N.$\
Hence, we get $r_{g}(k_{h}+k_{h}^{^{\prime }})\in N-I_{e}N$, which yields
that $k_{h}+k_{h}^{^{\prime }}\in N$ and then $k_{h}\in N.$ Therefore, $%
K_{h}\subseteq N.$

$(ii)\Rightarrow (i)$ Let $r_{g}\in h(R)$ and $m_{h}\in h(M)$ such that $%
r_{g}m_{h}\in N-I_{e}N.$ Then $K=(m_{h})$ is a graded submodule generated by
$m_{h}$. Since $r_{g}K_{h}\subseteq N$ and $r_{g}K_{h}\not\subseteq I_{e}N,$
by $(ii),$ we get either $K_{h}\subseteq N$ or $r_{g}\in (N:_{R}M)$ and then
either $m_{h}\in N$ or $r_{g}\in (N:_{R}M).$ Therefore, $N$ is a graded $%
I_{e}$-prime submodule of $M.$
\end{proof}

Recall that a graded zero-divisor on a graded $R$-module $M$ is an element $%
r_{g}\in h(R)$ for which there exists $m_{h}\in h(M)$ such that $m_{h}\not=0$
but $r_{g}m_{h}=0$. The set of all graded zero-divisors on $M$ is denoted by
$G$-$Zdv_{R}(M)$, see \cite{4}.

The following result studies the behavior of graded $I_{e}$-prime submodules
under localization.
\begin{theorem}
Let $R$ be a $G$-graded ring, $M$ a graded $R$%
-module, $S\subseteq h(R)$ be a multiplicatively closed subset of $R$ and $%
I=\oplus _{h\in G}I_{h}$ a graded ideal of $R$.
\begin{enumerate}[\upshape (i)]
  \item If $N$ is a graded $I_{e}$-prime submodule of $M$ with $(N:_{R}M)\cap
S=\emptyset $, then $S^{-1}N$ is a graded $I_{e}$-prime submodule of $%
S^{-1}M $.
  \item If $S^{-1}N$ is a graded $I_{e}$-prime submodule of $S^{-1}M$ with $%
S\cap G$-$Zdv_{R}(M/N)=\emptyset $, then $N$ is a graded $I_{e}$-prime
submodule of $M$.
\end{enumerate}
\end{theorem}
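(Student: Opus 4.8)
The plan is to treat the two implications separately, the common engine being the compatibility of the exceptional set with localization, namely $I_{e}(S^{-1}N)=S^{-1}(I_{e}N)$ (each generator $c_{e}(n/t)=c_{e}n/t$ with $c_{e}\in I_{e}$, $n\in N$, $t\in S$ lies in $S^{-1}(I_{e}N)$, and conversely one clears denominators), together with the preliminary fact that $S^{-1}N\neq S^{-1}M$ exactly when $S\cap(N:_{R}M)=\emptyset$. I will use throughout that every homogeneous element of $S^{-1}M$ has a representative $m/s$ with $m\in h(M)$ and $s\in S$, so that the graded definitions genuinely apply to the fractions that occur.

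For (i), I would first note that $(N:_{R}M)\cap S=\emptyset$ makes $S^{-1}N$ proper. Then take homogeneous $r_{g}/s\in h(S^{-1}R)$ and $m_{h}/t\in h(S^{-1}M)$ with $(r_{g}/s)(m_{h}/t)\in S^{-1}N-I_{e}(S^{-1}N)$. Membership in $S^{-1}N$ lets me clear denominators: there are $u,w\in S$ and $n\in N$ with $wur_{g}m_{h}=wstn\in N$, so the homogeneous element $(wur_{g})m_{h}$ lies in $N$. The key step is to show $(wur_{g})m_{h}\notin I_{e}N$: if it were, writing $wur_{g}m_{h}=\sum c_{i}n_{i}$ with $c_{i}\in I_{e}$, $n_{i}\in N$ would give $r_{g}m_{h}/(st)=\sum(c_{i}/1)(n_{i}/(wust))\in I_{e}(S^{-1}N)$, contradicting the hypothesis. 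Hence $(wur_{g})m_{h}\in N-I_{e}N$, and the $I_{e}$-primeness of $N$ yields either $m_{h}\in N$, whence $m_{h}/t\in S^{-1}N$, or $wur_{g}\in(N:_{R}M)$, whence $r_{g}/s\in(S^{-1}N:_{S^{-1}R}S^{-1}M)$ because $wur_{g}M\subseteq N$. This is exactly what is required.

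For (ii), I would first observe that $S^{-1}N$ proper forces $N$ proper. Given homogeneous $r_{g}\in h(R)$ and $m_{h}\in h(M)$ with $r_{g}m_{h}\in N-I_{e}N$, I pass to $S^{-1}M$: clearly $r_{g}m_{h}/1\in S^{-1}N$, and I must check $r_{g}m_{h}/1\notin I_{e}(S^{-1}N)=S^{-1}(I_{e}N)$. Granting this, the $I_{e}$-primeness of $S^{-1}N$ gives $m_{h}/1\in S^{-1}N$ or $r_{g}/1\in(S^{-1}N:_{S^{-1}R}S^{-1}M)$, and here the hypothesis $S\cap G$-$Zdv_{R}(M/N)=\emptyset$ is used to descend: from $sm_{h}\in N$ for some $s\in S$, if $m_{h}\notin N$ then $s$ would annihilate the nonzero homogeneous class $m_{h}+N\in h(M/N)$, contradicting $s\notin G$-$Zdv_{R}(M/N)$, so $m_{h}\in N$; running the same argument over all homogeneous $m_{\lambda}$ turns $r_{g}m_{\lambda}/1\in S^{-1}N$ into $r_{g}m_{\lambda}\in N$, giving $r_{g}\in(N:_{R}M)$.

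The main obstacle is the non-membership claim $r_{g}m_{h}/1\notin S^{-1}(I_{e}N)$ in (ii). Unwinding it, $r_{g}m_{h}/1\in S^{-1}(I_{e}N)$ would produce some $s\in S$ with $sr_{g}m_{h}\in I_{e}N$ while $r_{g}m_{h}\notin I_{e}N$, i.e. $s$ annihilating a nonzero homogeneous class in $M/I_{e}N$. This is the step that forces one to invoke the graded zero-divisor condition carefully, and it is where I would spend the most attention, since the hypothesis is phrased for $M/N$ whereas the quotient naturally arising here is $M/I_{e}N$; reconciling the two (or phrasing the statement in terms of the quotient that actually occurs) is the crux. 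Everything else — properness, clearing denominators, and the two descents — is routine once the exceptional sets are matched up through $I_{e}(S^{-1}N)=S^{-1}(I_{e}N)$.
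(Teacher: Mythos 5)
Your part (i) is correct and follows the paper's route: clear denominators to get $vr_{g}m_{h}\in N$ for some $v\in S$, check that this product cannot lie in $I_{e}N$ (else the original fraction would lie in $I_{e}S^{-1}N$), and apply the $I_{e}$-primeness of $N$. The paper simply asserts the existence of $t\in S$ with $tr_{g}m_{h}\in N-I_{e}N$ without justifying the second membership, so your explicit verification supplies exactly the detail the printed proof omits; whether one then groups the product as $r_{g}(tm_{h})$ (the paper) or $(tr_{g})m_{h}$ (you) is immaterial.

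For part (ii) the obstacle you isolate is genuine and cannot be argued away: $r_{g}m_{h}\notin I_{e}N$ together with $S\cap G$-$Zdv_{R}(M/N)=\emptyset$ does not give $r_{g}m_{h}/1\notin I_{e}(S^{-1}N)$, because the quotient that matters there is $N/I_{e}N$, not $M/N$. In fact statement (ii) is false as written. Take $G$ trivial (everything in degree $e$), $R=M=\mathbb{Z}$, $N=4\mathbb{Z}$, $I=3\mathbb{Z}$, $S=\{3^{k}:k\geq 0\}$. Then $S\cap G$-$Zdv_{R}(M/N)=\emptyset$ since multiplication by an odd integer is injective on $\mathbb{Z}/4\mathbb{Z}$; moreover $S^{-1}N=4\mathbb{Z}[1/3]$ is proper in $S^{-1}M=\mathbb{Z}[1/3]$ and $I_{e}S^{-1}N=12\mathbb{Z}[1/3]=4\mathbb{Z}[1/3]=S^{-1}N$, so $S^{-1}N-I_{e}S^{-1}N=\emptyset$ and $S^{-1}N$ is vacuously a graded $I_{e}$-prime submodule of $S^{-1}M$. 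Yet $N$ is not graded $I_{e}$-prime: $2\cdot 2=4\in N-I_{e}N=4\mathbb{Z}-12\mathbb{Z}$ while $2\notin N$ and $2\notin (N:_{R}M)=4\mathbb{Z}$. The paper's own proof of (ii) commits precisely the step you flagged, asserting $\frac{r_{g}}{1}\frac{m_{h}}{1}\in S^{-1}N-I_{e}S^{-1}N$ with no argument. To rescue the implication one needs an extra hypothesis ensuring that no element of $S$ annihilates a nonzero homogeneous class of $N/I_{e}N$ (for instance $S\cap G$-$Zdv_{R}(M/I_{e}N)=\emptyset$ in addition to the stated condition); under such a hypothesis your outline for (ii), including the two descent arguments, closes correctly.
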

\begin{proof}
$(i)$ Since $(N:_{R}M)\cap S=\emptyset ,$ $S^{-1}N$ is a proper
graded submodule of $S^{-1}M.$\ Let $\frac{r_{g}}{s_{1}}\in h(S^{-1}R)$ and $%
\frac{m_{h}}{s_{2}}\in h(S^{-1}M)$ such that $\frac{r_{g}}{s_{1}}\frac{m_{h}%
}{s_{2}}\in S^{-1}N-I_{e}S^{-1}N.$ Then there exists $t\in S$ such that $%
tr_{g}m_{h}\in N-I_{e}N$ which yields that either $tm_{h}\in N$ or $r_{g}\in
(N:_{R}M)$ as $N$ is a graded $I_{e}$-prime submodule of $M.$ Hence either $%
\frac{m_{h}}{s_{2}}=\frac{tm_{h}}{ts_{2}}\in S^{-1}N$\ or $\frac{r_{g}}{s_{1}%
}\in S^{-1}(N:_{R}M)=(S^{-1}N:_{S^{-1}R}S^{-1}M).$ Therefore, $S^{-1}N$ is a
graded $I_{e}$-prime submodule of $S^{-1}M$.

$(ii)$ Let $r_{g}\in h(R)$ and $m_{h}\in h(M)$ such that $r_{g}m_{h}\in
N-I_{e}N$. Then $\frac{r_{g}}{1}\frac{m_{h}}{1}\in S^{-1}N-I_{e}S^{-1}N$.
Since $S^{-1}N$ is a graded $I_{e}$-prime submodule of $S^{-1}M$, either $%
\frac{m_{h}}{1}\in S^{-1}N$ or $\frac{r_{g}}{1}\in
(S^{-1}N:_{S^{-1}R}S^{-1}M)$. If $\frac{m_{h}}{1}\in S^{-1}N$, then there
exists $t\in S$ such that $sm_{h}\in N$. Which yields that $m_{h}\in N$
since $S\cap G$-$Zdv_{R}(M/N)=\emptyset $. Now, if $\frac{r_{g}}{1}\in
(S^{-1}N:_{S^{-1}R}S^{-1}M)=S^{-1}(N:_{R}M)$, then there exists $s\in S$
such that $sr_{g}M\subseteq N$ and hence $r_{g}\in (N:_{R}M)$ since $S\cap G$%
-$Zdv_{R}(M/N)=\emptyset $. Therefore, $N$ is a graded $I_{e}$-prime
submodule of $M$.
\end{proof}


\bigskip\bigskip\bigskip\bigskip


\begin{thebibliography} {10}
\bibitem{1} K. Al-Zoubi, Some properties of graded 2-prime submodules, Asian-European
Journal of Mathematics, 8 (2), (2015) 1550016-1-- 1550016-5

\bibitem{2}K. Al-Zoubi and R. Abu-Dawwas, On graded 2-absorbing and weakly
graded 2-absorbing submodules, J. Math. Sci. Adv. Appl., 28 (2014), 45--60

\bibitem{3} K. Al-Zoubi and R. Abu-Dawwas, On graded quasi-prime submodules, Kyungpook
Math. J., 55 (2) (2015), 259-266.

\bibitem{4} K. Al-Zoubi and A. Al-Qderat, Some properties of graded comultiplication
modules, Open Mathematics, 15( 2017),187-192.

\bibitem{5} K. Al-Zoubi, M. Jaradat and R. Abu-Dawwas, On graded classical prime and
graded prime submodules, Bull. Iranian Math. Soc., 41 (1) (2015), 217-225.

\bibitem{6} K. Al-Zoubi and J. Paseka, On Graded Coprimely Packed Modules, Adv. Stud.
Contemp. Math. (Kyungshang), 29(2) (2019), 271 -- 279.

\bibitem{7} K. Al-Zoubi and F. Qarqaz, An Intersection condition for graded prime
submodules in Gr-multiplication modules, Math. Reports, 20 (3), 2018,
329-336.

\bibitem{8}I. Akray and H. S. Hussein, I-prime submodules, Acta. Math.
Academic Paedagogicae Nyiregyhaziensis, 33 ( 2017),165-173.

\bibitem{9} S.E Atani, On graded weakly prime submodules, Int. Math. Forum, 1 (2)
2006, 61-66.

\bibitem{10} S.E. Atani, On graded prime submodules, Chiang Mai J. Sci., 33 (1)
(2006), 3-7.

\bibitem{11} J. Escoriza and B. Torrecillas,  Multiplication Objects in Commutative
Grothendieck Categories, Comm. in Algebra,  26 (6) (1998), 1867-1883.

\bibitem{12}R. Hazrat, Graded Rings and Graded Grothendieck Groups, Cambridge
University Press, Cambridge, 2016.

\bibitem{13} C. Nastasescu and F. Van Oystaeyen, Graded and filtered rings and
modules, Lecture notes in mathematics 758, Berlin-New York: Springer-Verlag,
1982.

\bibitem{14} C. Nastasescu, F. Van Oystaeyen, Graded Ring Theory, Mathematical Library
28, North Holand, Amsterdam, 1982.

\bibitem{15} C. Nastasescu and F. Van Oystaeyen, Methods of Graded Rings, LNM 1836.
Berlin-Heidelberg: Springer-Verlag, 2004.

\bibitem{16} K. H. Oral, U. Tekir and A. G. Agargun, On graded prime and primary
submodules, Turk. J. Math., 35 (2011), 159--167.

\bibitem{17} M. Refai and K. Al-Zoubi, On graded primary ideals, Turk. J. Math. 28 (3) (2004), 217-229.





\end{thebibliography}
\end{document}